\newtheorem{thm}{Theorem}[section]
\newtheorem{lem}[thm]{Lemma}
\newtheorem{prop}[thm]{Proposition}
\newtheorem{defn}[thm]{Definition}
\newtheorem{rmk}{Remark}
\numberwithin{equation}{section}
\newcommand{\bel}{\begin{equation} \label}
\newcommand{\ee}{\end{equation}}
\def\beq{\begin{equation}}
\def\eeq{\end{equation}}
\newcommand{\bea}{\begin{eqnarray}}
\newcommand{\eea}{\end{eqnarray}}
\newcommand{\beas}{\begin{eqnarray*}}
\newcommand{\eeas}{\end{eqnarray*}}
\newcommand{\pd}{\partial}
\newcommand{\ppp}{\partial}
\newcommand{\re}{\mathfrak R}
\newcommand{\im}{\mathfrak I}
\newcommand{\R}{\mathbb{R}}
\newcommand{\C}{\mathbb{C}} 
\newcommand{\N}{\mathbb{N}}
\newcommand{\cA}{\mathcal{A}}
\newcommand{\cB}{\mathcal{B}}
\newcommand{\cC}{\mathcal{C}}
\newcommand{\cE}{\mathcal{E}}
\newcommand{\cL}{\mathcal{L}}
\newcommand{\cO}{\mathcal{O}}
\newcommand{\cS}{\mathcal{S}}
\newcommand{\Hin}{\mathcal{H}_{\mathrm{in}}}
\newcommand{\Gi}{S_{\mathrm{in}}}
\newcommand{\Go}{S_{\mathrm{out}}}
\def\epsilon{\varepsilon}
\def\phi {\varphi}
\def\ppp{\partial}
\providecommand{\norm}[1]{\left\lVert#1\right\rVert}
\renewcommand{\leq}{\leqslant}
\renewcommand{\geq}{\geqslant}
\providecommand{\norm}[1]{\left\lVert#1\right\rVert}
\def\thefootnote{{}}
\title{\bf On time-fractional diffusion equations with space-dependent 
variable order}
\author{
Yavar Kian$^*$,
Eric Soccorsi$^*$,
and Masahiro Yamamoto$^\dag$
}
\date{}
\begin{document}

\begin{abstract}
We investigate diffusion equations with time-fractional derivatives of 
space-dependent variable order. We establish the well-posedness issue and 
prove that the space-dependent variable order coefficient is uniquely 
determined by the knowledge of 
a suitable time-sequence of partial Dirichlet-to-Neumann maps.
\end{abstract}

\maketitle

\renewcommand{\thefootnote}{\fnsymbol{footnote}}
\footnotetext{\hspace*{-5mm} 
\begin{tabular}{@{}r@{}p{13cm}@{}} 
\\
$^\dag$& Graduate School of Mathematical Sciences, 
the University of Tokyo, 
3-8-1 Komaba, Meguro-ku, Tokyo 153-8914, Japan. 
E-mail:
myama@next.odn.ne.jp
\\
$^*$& Aix Marseille Universit\'e, Universit\'e de Toulon, CNRS, CPT, 
Marseille, France.
\end{tabular}}

\section{Introduction}

\subsection{Statement of the problem}

Let $\Omega$ be a bounded domain of $\R^d$, $d \geq 2$, with Lipschitz 
continuous boundary $\partial \Omega$, and let $(a_{i,j})_{1 \leq i,j \leq d} 
\in L^\infty(\Omega;\R^{d^2})$ be symmetric,
i.e., fulfill $a_{i,j}=a_{j,i}$ a.e. in $\Omega$, for $i,j = 1,\ldots,d$,
and satisfy the ellipticity condition
\bel{ell}
\exists c>0,\ \sum_{i,j=1}^d a_{i,j}(x) \xi_i \xi_j \geq c |\xi|^2,\ x \in 
\Omega,\ \xi=(\xi_1,\ldots,\xi_d) \in \R^d.
\ee
For $\kappa \in (d,+\infty]$ and $q \in L^\kappa(\Omega)$, such that
\bel{a9}
q(x) \ge 0,\ x \in \Omega, 
\ee
we introduce the formal differential operators 
$$ 
\cA_0 u(x)=-\sum_{i,j=1}^d \partial_{x_i}\left(a_{i,j}(x) \partial_{x_j} u(x) 
\right)\ \mbox{and}\ \cA_q u(x) := \cA_0 u(x) + q(x) u(x),\  x \in\Omega, 
$$
where we set $\ppp_{x_i} = \frac{\ppp}{\ppp x_i}$, $i=1,\ldots,d$.

Given $T\in(0,+\infty]$ and two functions $\alpha \in L^\infty(\Omega)$ and 
$\rho \in L^\infty(\Omega)$ satisyfing 
\bel{eq-rho}
0 < \alpha_0 \leq \alpha(x) \leq \alpha_M<1\ \mbox{and}\ 0<\rho_0 \leq\rho(x) 
\leq\rho_M <+\infty,\ x \in \Omega, 
\ee
we consider the initial-boundary value problem for 
a space-dependent variable order (VO) fractional diffusion equation
\bel{eq1}
\left\{ \begin{array}{rcll} 
(\rho(x)\partial_t^{\alpha(x)} +\cA_q) u(t,x) & = & f(t,x), & (t,x)\in  
Q:=(0,T) \times \Omega,\\
u(t,x) & = & 0, & (t,x) \in \Sigma := (0,T) \times \pd \Omega, \\  
u(0,x) & = & u_0(x), & x \in \Omega.
\end{array}
\right.
\ee
Here and below, $\partial_t^{\alpha(x)}$ denotes the Caputo fractional 
derivative of order $\alpha(x)$ with respect to $t$, defined by 
$$
\partial_t^{\alpha(x)} u(t,x):=\frac{1}{\Gamma(1-\alpha(x))}\int_0^t
(t-s)^{-\alpha(x)} \partial_s u(s,x) d s,\ (t,x) \in Q,
$$
where $\Gamma$ is the Gamma function.

In this paper, we pursue two goals. The first one is to establish the 
well-posedness of the initial-boundary value problem \eqref{eq1} 
for a suitable source term $f$ and initial value $u_0$. 
The second one is to analyse the uniqueness in an inverse problem 
of determining simultaneously the fractional order $\alpha$ and 
two coefficients $\rho$ and $q$ of the diffusion equation in 
\eqref{eq1} by partial Cauchy data.

\subsection{Physical motivations}

Anomalous diffusion in complex media is a rapidly growing field of academic 
research with multiple engineering applications in geophysics, environmental 
science and biology. The diffusion properties of homogeneous porous media are 
currently modeled, see e.g., \cite{AG,CSLG}, by constant order (CO) 
time-fractional diffusion processes where in \eqref{eq1} the mapping 
$x \mapsto \alpha(x)$ is maintained constant over $\Omega$. 
However, in complex media, the presence of heterogeneous regions causes 
variations of the permeability in different spatial positions, and in this 
case, the VO time-fractional model is more relevant for describing the 
diffusion process, and see e.g., \cite{SCC}. This is a background for 
VO time-fractional diffusion equations.


\subsection{A short review of the mathematical literature of time-fractional 
diffusion equations}

Ordinary and partial differential equations with fractional derivatives have 
been intensively studied  over the two last decades. We refer to 
\cite{KST,MR,P, SM} for a general introduction to fractional calculus, and 
for example to \cite{A, GM,L} for more specific foci on partial differential 
equations with time fractional derivatives. The well-posedness problem for CO 
time-fractional diffusion equations was addressed in \cite{BY, GLY, SY, Z1} 
and see also \cite{DAV}, where the local existence, the uniqueness and 
the continuity on initial data of the solution to partial integrodifferential 
equations of parabolic type are discussed.
The time decay of their solutions was studied in \cite{KSVZ}. Recently, a new 
definition of the weak solution to these equations was introduced in \cite{KY},
which allows for defining solutions to semi-linear fractional wave equations. 
Moreover, initial-boundary value problems for multi-terms time-fractional 
diffusion equations were studied by \cite{BY, LLY}. 
Notice that a De Giorgi-Nash H\"older regularity theorem was derived in 
\cite{ACV} (see also \cite{Z2}) for solutions to CO time-fractional equations
with fractional diffusion in space. 
As for distributed order (DO) time-fractional diffusion equations, we refer 
to \cite{LKS} for the analysis of the well-posedness problem, and to 
\cite{LKS,LLY2} for the study of the asymptotic behavior of the solution. 
However, in contrast with CO or DO time-fractional equations, 
for VO time-fractional diffusion equations. to our best 
knowledge, there are no results available in the mathematical literature.

Quite similarly, there is only a small number of mathematical papers dealing 
with inverse problems associated with time-fractional diffusion processes, 
which are listed below. In the one-dimensional case, \cite{CNYY} proved 
simultaneous determination of the constant fractional differential order and 
the time-independent diffusion coefficient by Dirichlet boundary measurements 
for the solution. In dimensions 2 or larger, \cite{HNWY} determined 
a constant fractional order from measurements at one point of the solution 
over the entire time span. 
In \cite{FK, SY}, the time-varying factor in the source term or in the zeroth 
order coefficient of time-fractional equations was stably determined by 
pointwise observation of the solution. For half-order fractional diffusion 
equations, \cite{CXY,YZ} proved stability in determining a zeroth order 
coefficient by means of a Carleman estimate. An inverse boundary value 
problem for diffusion equations with multiple fractional time derivatives is 
examined in \cite{LIY} and the authors prove the uniqueness in determining the 
number of fractional time-derivative terms, the orders of the derivatives, 
and spatially varying coefficients. Finally, in \cite{KOSY}, the zeroth and 
first order space-dependent coefficients defined on a Riemanian manifold, 
along with the Riemanian metric, are simultaneously determined by a 
partial Dirichlet-to-Neumann map taken at one arbitrarily fixed time.

\subsection{Main results}

The first result of this paper is given for a Lipschitz continuous bounded 
domain $\Omega$. It establishes the existence, the uniqueness and the 
regularity properties of the weak solution to the initial-boundary 
value problem \eqref{eq1} in the sense of Definition \ref{d1} below.
For the statement of the mail result, we introduce the contour in $\C$:
\bel{g1} 
\gamma(\epsilon,\theta) := \gamma_-(\epsilon,\theta) \cup \gamma_0(\epsilon,
\theta) \cup \gamma_+(\epsilon,\theta)
\ee
with $\epsilon \in (0,1)$ and $\theta \in \left( \frac{\pi }{2},\pi \right)$,
where 
\bel{g2}
\gamma_0(\epsilon,\theta) := \{ \epsilon e^{i \beta};\ \beta \in
[-\theta,\theta] \}\ \mbox{and}\ \gamma_\pm(\epsilon,\theta) 
:= \{s e^{\pm i \theta};\ s \in [\epsilon,+\infty) \}
\ee
and the double sign corresponds each other.

Henceforth $\langle t \rangle$ stands for 
$(1+t^2)^{\frac{1}{2}}$, and the interval $(0,T]$ (resp., $[0,T]$) should 
be understood as $(0,+\infty)$ (resp., $[0,+\infty)$) for the case of
$T=+\infty$.
\\

Furthermore by $A_q$ we denote the self-adjoint realization in $L^2(\Omega)$ of
the operator $\cA_q$ with the homogeneous Dirichlet boundary
condition and for $p \in \C \setminus \R_-$, 
by $(A_q+\rho(x)p^{\alpha(x)})^{-1}$ the resolvent operator 
of $A_q+\rho(x)p^{\alpha(x)}$.

Henceforth $\mathcal{B}(X,Y)$ denotes the Banach space of all the bounded 
linear operators from a Banach space $X$ to another Banach space $Y$, 
and we set 
$\mathcal{B}(X) = \mathcal{B}(X,X)$.

Then the existence and uniqueness result of a weak solution to \eqref{eq1} 
is as follows.

\begin{thm}
\label{t1} 
Suppose that \eqref{ell} and \eqref{a9} are fulfilled. 
Let $u_0\in L^2(\Omega)$.  
We assume that $f \in L^\infty(0,T;L^2(\Omega))\cap 
\mathcal C((0,T];L^2(\Omega))$ in the case of $T < +\infty$, and 
$f \in  \mathcal C((0,+\infty);L^2(\Omega))$ satisfies 
$\langle t \rangle^{-\zeta} f \in L^\infty(\R_+;L^2(\Omega))$ with 
some $\zeta \in \R_+$ in the case of $T=+\infty$.
Then there exists a unique weak solution $u \in\mathcal C((0,T];L^2(\Omega))$ 
to \eqref{eq1}, which is expressed by
\bel{t1a} 
u(t) = u(t,\cdot) = S_0(t) u_0 + \int_0^t S_1(t-\tau) f(\tau) d \tau+ S_2 f(t),
\ t \in (0,T],
\ee
where we set
$$ 
S_0(t) \psi := \frac{1}{2 i \pi} \int_{\gamma(\epsilon,\theta)} 
e^{t p} (A_q+\rho(x)p^{\alpha(x)})^{-1} \rho(x)p^{\alpha(x)-1} \psi dp,
$$
$$ 
S_1(t) \psi := \frac{1}{2 i \pi} \int_{\gamma(\epsilon,\theta)} 
e^{t p} (A_q+\rho(x)p^{\alpha(x)})^{-1} \psi dp
$$
and
$$ 
S_2 \psi := \frac{1}{2 i \pi} \int_{\gamma(\epsilon,\theta)} 
p^{-1}(A_q+\rho(x)p^{\alpha(x)})^{-1} \psi dp
$$
for all $\psi \in L^2(\Omega)$, 
the three above integrals being independent of the choice of 
$\epsilon \in (0,1)$ and $\theta \in \left( \frac{\pi }{2},\pi \right)$.

Moreover, if $f=0$, then the mapping $u: (0,T) \longrightarrow 
L^2(\Omega)$ is analytic in $(0,T)$.
\end{thm}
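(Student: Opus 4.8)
The plan is to solve the initial-boundary value problem \eqref{eq1} by the Laplace transform in time, converting the nonlocal-in-time equation into a stationary family of resolvent equations indexed by the transform variable $p$, and then to recover $u$ by inverting the transform along the contour $\gamma(\epsilon,\theta)$ of \eqref{g1}--\eqref{g2}. Since $0<\alpha_0\leq\alpha(x)\leq\alpha_M<1$ by \eqref{eq-rho}, the Laplace transform of the Caputo derivative reads $\widehat{\partial_t^{\alpha(x)}u}(p)=p^{\alpha(x)}\hat u(p)-p^{\alpha(x)-1}u_0(x)$, so \eqref{eq1} transforms into $(A_q+\rho(x)p^{\alpha(x)})\hat u(p)=\hat f(p)+\rho(x)p^{\alpha(x)-1}u_0$, whence $\hat u(p)=(A_q+\rho(x)p^{\alpha(x)})^{-1}\big(\hat f(p)+\rho(x)p^{\alpha(x)-1}u_0\big)$. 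Inverting this relation --- directly for the $u_0$-contribution and through the convolution theorem for the source --- leads to a representation of the form \eqref{t1a}, which I would then confirm rigorously by verifying that the Laplace transform of its right-hand side coincides with $(A_q+\rho(x)p^{\alpha(x)})^{-1}(\hat f+\rho p^{\alpha-1}u_0)$ and that such a function solves \eqref{eq1} weakly in the sense of Definition \ref{d1}. Thus the theorem reduces to (i) a quantitative resolvent analysis of $A_q+\rho(x)p^{\alpha(x)}$, (ii) the convergence and $(\epsilon,\theta)$-independence of the contour integrals, and (iii) the verification, uniqueness and analyticity statements.

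The crux --- and the step I expect to be the main obstacle --- is the resolvent bound for $A_q+\rho(x)p^{\alpha(x)}$. Because $x\mapsto\alpha(x)$ is not constant, $\rho(x)p^{\alpha(x)}$ is a genuine multiplication operator rather than a scalar spectral shift, so one cannot diagonalise against the spectral resolution of the self-adjoint operator $A_q$ as in the constant-order setting; instead I would work with the numerical range of the associated sesquilinear form. Writing $p=re^{i\psi}$ with $r=\abs{p}$ and $\psi=\arg p\in(-\pi,\pi)$, for $v\in\dom(A_q)$ one has $\langle(A_q+\rho p^{\alpha})v,v\rangle=\langle A_q v,v\rangle+\int_\Omega\rho(x)r^{\alpha(x)}e^{i\alpha(x)\psi}\abs{v(x)}^2\,dx$, where the first term is real and nonnegative by \eqref{ell} and \eqref{a9}. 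Every sampled phase $\alpha(x)\psi$ lies in $(-\alpha_M\pi,\alpha_M\pi)\subset(-\pi,\pi)$, so the integral term sits in an open sector about $\R_+$ of half-angle $\alpha_M\abs{\psi}<\pi$; bounding $\sin(\alpha(x)\psi)$ and $\cos(\alpha(x)\psi)$ from below and using $\rho\geq\rho_0>0$ yields, uniformly for $p\in\gamma(\epsilon,\theta)$, a lower bound $\abs{\langle(A_q+\rho p^{\alpha})v,v\rangle}\geq c(\theta)\,r^{\alpha_0}\norm{v}^2$ for $r\geq1$ and $\geq c(\epsilon,\theta)\norm{v}^2$ for $\epsilon\leq r\leq1$. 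Since this closed operator then has a numerical range omitting a neighbourhood of the origin and a nonempty resolvent set (it contains the large negative reals), the origin lies in the unbounded component of the complement of the closed numerical range and is a regular point, so $\norm{(A_q+\rho(x)p^{\alpha(x)})^{-1}}_{\mathcal B(L^2(\Omega))}\leq C(\theta)r^{-\alpha_0}$ for $r\geq1$ and is bounded for $\epsilon\leq r\leq1$. I would also record that $p\mapsto(A_q+\rho(x)p^{\alpha(x)})^{-1}$ is analytic on $\C\setminus\R_-$.

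With these estimates, the function-theoretic steps are comparatively routine. On the rays $\gamma_\pm(\epsilon,\theta)$ one has $\operatorname{Re}(tp)=ts\cos\theta<0$ because $\theta>\frac{\pi}{2}$, so for every $t>0$ the factor $e^{tp}$ forces absolute convergence of the integrals defining $S_0(t)$ and $S_1(t)$ (the resolvent decay $r^{-\alpha_0}$ against the polynomial weights being beaten by the exponential), while the integral defining $S_2$ converges absolutely on its own from the $\abs{p}^{-1-\alpha_0}$ decay of $p^{-1}(A_q+\rho p^{\alpha})^{-1}$, confirming that $S_2$ is a bounded, $t$-independent operator. Independence of all three integrals from $(\epsilon,\theta)$ follows from Cauchy's theorem: the integrands are analytic in $\C\setminus\R_-$, the region swept when deforming $(\epsilon,\theta)$ meets neither the cut $\R_-$ nor the origin, and the contributions of the arcs at infinity vanish by the decay estimates. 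Continuity $u\in\mathcal C((0,T];L^2(\Omega))$ follows by dominated convergence, and $u(t)\in\dom(A_q)\subset H_0^1(\Omega)$ delivers the homogeneous Dirichlet condition; the initial condition $u(0)=u_0$ is read off from the $t\to0^+$ asymptotics of $S_0(t)u_0$ together with the vanishing of the source terms. Uniqueness is handled in the same transformed picture: the growth hypotheses on $f$ (and the $\langle t\rangle^{\zeta}$ control when $T=+\infty$) give a weak solution of the homogeneous problem a well-defined Laplace transform $\hat u(p)$ with $(A_q+\rho p^{\alpha})\hat u(p)=0$, and injectivity of the resolvent forces $\hat u\equiv0$, hence $u\equiv0$.

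For the \emph{Moreover} assertion, when $f=0$ one has $u(t)=S_0(t)u_0$, and I would prove analyticity by extending this contour integral holomorphically in $t$ to the sector $\abs{\arg t}<\theta-\frac{\pi}{2}$. Writing $t=\abs{t}e^{i\arg t}$ and $p=se^{\pm i\theta}$ on $\gamma_\pm$, one has $\operatorname{Re}(tp)=s\abs{t}\cos(\arg t\pm\theta)\leq-\delta\,s\abs{t}$ for some $\delta=\delta(\theta,\arg t)>0$ whenever $\abs{\arg t}<\theta-\frac{\pi}{2}$, uniformly for $t$ in compact subsets of the sector. Hence each $t$-derivative, which merely inserts a factor $p$ under the integral sign, still produces an absolutely and locally uniformly convergent integral, so differentiation under the integral is legitimate to all orders and, by Morera's theorem, $t\mapsto S_0(t)u_0$ is holomorphic on the sector; its restriction to $(0,T)$ is the claimed analytic map. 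Throughout, the only genuinely delicate point is the resolvent bound of the second paragraph --- once it is in place, the exponential decay granted by $\theta>\frac{\pi}{2}$ drives every remaining estimate.
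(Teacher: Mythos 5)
Your overall architecture --- Laplace transform in time, quantitative control of the resolvent family $(A_q+\rho(x)p^{\alpha(x)})^{-1}$, inversion along the Hankel-type contour $\gamma(\epsilon,\theta)$, uniqueness via injectivity of the Laplace transform --- is the same as the paper's, but your treatment of the crux lemma is genuinely different and is correct. Where the paper (Proposition \ref{l1}) factorizes $A_q+\rho(x)p^{\alpha(x)}=U_\beta\left(U_\beta^{-1}B_{q,p}U_\beta^{-1}+i\right)U_\beta$ away from $\R_+$ and runs a Neumann series in the thin $r$-dependent sector $|\beta|\leq\theta_*(r)$ near $\R_+$, you bound the distance from the origin to the (convex) numerical range of the sesquilinear form: the imaginary part controls $\pi/2\leq|\arg p|\leq\theta$ (there $\alpha(x)\arg p$ stays in a compact subset of $(0,\pi)$), the real part controls $|\arg p|\leq\pi/2$ (there $\cos(\alpha(x)\arg p)\geq\cos(\alpha_M\pi/2)>0$), and the large negative reals supply the required point of the resolvent set. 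This yields $\norm{(A_q+\rho(x)p^{\alpha(x)})^{-1}}_{\cB(L^2(\Omega))}\leq C(\theta)|p|^{-\alpha_0}$ for $|p|\geq 1$, uniformly in $|\arg p|\leq\theta$ --- in fact cleaner than the combination \eqref{l1a}, \eqref{a14}, since it does not degenerate near the positive real axis. Your construction also differs: the paper inserts $p^{-2}$, inverts on vertical lines, and takes distributional time derivatives precisely because the resolvent decays too slowly for absolute convergence without the factor $e^{tp}$; you work on $\gamma(\epsilon,\theta)$ from the start and verify the Laplace transform of the proposed formula by Fubini and a residue computation, which your uniform bound does support (the arcs of radius $R$ contribute $O(R^{-\alpha_0})$, possibly up to a logarithm).

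There is, however, one genuine gap: the term $S_2f(t)$ in \eqref{t1a}. Your inversion ``through the convolution theorem'' produces $u(t)=S_0(t)u_0+\int_0^tS_1(t-\tau)f(\tau)\,d\tau$ with no $S_2$ term; indeed, carried out, your verification gives $\cL[S_1\psi](p)=(A_q+\rho(x)p^{\alpha(x)})^{-1}\psi$, so the convolution alone already has the Laplace transform required by \eqref{a3}, and adding $S_2f$ would \emph{violate} \eqref{a3} unless $S_2f\equiv 0$. In the paper the term is not decorative: the kernel actually inverted (the proof's $S_1$ in \eqref{a14b}) is the inverse transform of $p^{-1}(A_q+\rho(x)p^{\alpha(x)})^{-1}$, not of the resolvent itself, and the solution is $\partial_t(S_1*\tilde f)$, whose Leibniz expansion $\int_0^t S_1^{\mathrm{thm}}(t-\tau)f(\tau)\,d\tau+S_2f(t)$ is exactly where $S_2$ arises as a boundary term. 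So, as written, your plan establishes a representation that differs from the asserted \eqref{t1a} by $S_2f(t)$, and you never reconcile the two. The reconciliation is available with your own tools: closing $\gamma(\epsilon,\theta)$ with the arc $\{Re^{i\beta};\ |\beta|\leq\theta\}$ and using your uniform bound shows that $S_2=0$ identically (note the paper proves this only under the extra hypothesis $\alpha_M<2\alpha_0$ in Remark \ref{rm-cl}, because it relies on the lossier estimate \eqref{a14}); but this step must be stated and carried out, otherwise \eqref{t1a} as stated is not proved. A minor further point: you need not ``read off'' $u(0)=u_0$ from the $t\to 0^+$ asymptotics of $S_0(t)u_0$ --- Definition \ref{d1} encodes the initial datum through the term $\rho(x)p^{\alpha(x)-1}u_0$ in \eqref{a3}, and the theorem claims continuity only on $(0,T]$.
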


\begin{rmk}
\label{rm-cl}
We point out that for all $\alpha_0 \in \left( 0, \frac{1}{2} \right)$, 
the operator $S_2$ is identically zero, provided we have $\alpha_M  \in 
(\alpha_0, 2 \alpha_0)$. 
Therefore \eqref{t1a} is reduced to the classical Duhamel formula:
\bel{t1a-cl}
u(t) = S_0(t) u_0 + \int_0^t S_1(t-\tau) f(\tau) d \tau,\ t \in (0,T].
\ee
\end{rmk}

The second result deals with the inverse problem of determining the unknown 
coefficients $\alpha$, $\rho$, $q$ of the time fractional diffusion 
equation in \eqref{eq1} by partial boundary data of the solution.
More precisely, we assume that $\partial \Omega$ is $\cC^{1,1}$ and 
\bel{eq-DLaplace}
a_{i,j}(x)=\delta_{i,j},\ x \in \Omega,\ i,j =1,\ldots,d,
\ee
where $\delta_{i,j}$ is equal to $1$ whenever $i=j$, and to $0$ otherwise.
Then we fix $k \in \N \setminus \{ 0  , 1 \}$ and consider the following 
system
\bel{eq11}
\left\{\begin{array}{rcll} 
(\rho(x)\partial_t^{\alpha(x)} +\cA_q) u(t,x) & = & 0, & (t,x)\in  (0,+\infty)\times\Omega,\\
u(t,x) & = & t^k g(x), & (t,x) \in (0,+\infty)\times\pd\Omega, \\  
u(0,x) & = & 0, & x \in \Omega,
\end{array}
\right.
\ee
with suitable $g$. 
Given two non empty subsets $\Gi$ and $\Go$ of $\ppp\Omega$, 
we introduce the following boundary operator
\bel{bound}
\mathcal N_{\alpha,\rho,q}(t) : \Hin \ni g \mapsto \partial_\nu u_g(t,\cdot)
_{|\Go}, \quad t \in (0,+\infty),
\ee
where $ \Hin := \{g\in H^{3\slash2}(\partial\Omega);\ \textrm{supp}\ g 
\subset \overline{\Gi} \}$. 
Here by $u_g$ we denote a unique solution in $\cC([0,+\infty);H^2(\Omega))$
to \eqref{eq11}, whose existence is guaranteed by Proposition \ref{pr1}
stated below, $\nu$ is the outward normal unit vector to $\pd \Omega$, 
and $\pd_\nu u_g(t,x) := \nabla u_g(t,x) \cdot \nu(x)$ for $(t,x) \in 
(0,+\infty)\times\pd\Omega$.

We discuss the uniqueness in the inverse problem of determining 
the coefficients $(\alpha,\rho,q)$ from the knowledge of the 
boundary operators $\{ \mathcal N_{\alpha,\rho,q}(t_n); \thinspace n \in \N \}$
associated with a time sequence $t_n$, $n \in \N$ fulfilling
\bel{cond1}
\mbox{the set $\{ t_n; \ n \in \N \}$ has an accumulation point in 
$(0,+\infty)$.}
\ee
Moreover we assume that $\Omega$, $\Gi$ and $\Go$ satisfy the 
following conditions.
\begin{enumerate}[(i)]
\item {\bf Case: $d=2$.}\\
It is required that $\partial\Omega$ is composed of a finite number of 
smooth closed contours.  In this case, we choose $\Gi=\Go :=\gamma$, 
where $\gamma$ is any arbitrary non-empty relatively open subset of 
$\partial\Omega$, and the set of admissible unknown coefficients reads
\beas
\cE_2 & := &  \left\{ (\alpha,\rho,q);\ \alpha \in W^{1,r}(\Omega)\ \mbox{and}
\ \rho \in W^{1,r}(\Omega)\ \mbox{fulfill}\ \eqref{eq-rho}\ \mbox{and} \right.
\\
& & \left. \ \ \ q\in W^{1,r}(\Omega;\R_+)\ \mbox{with}\ r \in (2,+\infty) 
\right\}.
\eeas
\item {\bf Case: $d \geq 3$.}\\
We choose $x_0 \in \R^d$ outside the convex hull of $\overline{\Omega}$.
Then we assume that
$$ \{x\in\partial\Omega;\ (x-x_0)\cdot\nu \geq 0\} \subset 
\Gi\ \mbox{and}\ \{x\in\partial\Omega;\ (x-x_0)\cdot\nu\leq0\} \subset \Go.
$$
Furthermore we define the set of admissible unknown coefficients by
$$
\cE_d := \left\{ (\alpha,\rho,q); \ \alpha \in L^\infty(\Omega)\ \mbox{and}\ 
\rho \in L^\infty(\Omega)\ \mbox{fulfill}\ \eqref{eq-rho}\ \mbox{and}\ 
q \in L^\infty(\Omega;\R_+) \right\}. 
$$
\end{enumerate}
The uniqueness result for our inverse coefficients problem is as follows.

\begin{thm}
\label{It2} 
Let $t_n$, $n\in\N$ fulfill \eqref{cond1} and assume 
that either $(i)$ or $(ii)$ is satisfied.
If 
\bel{It2a}
\mathcal N_{\alpha_1,\rho_1,q_1}(t_n)=\mathcal N_{\alpha_2,\rho_2,q_2}(t_n),
\ n\in\N, \quad (\alpha_j,\rho_j,q_j) \in \cE_d, \thinspace j=1,2,  
\ee
then we have $(\alpha_1,\rho_1,q_1) = (\alpha_2,\rho_2,q_2)$.
\end{thm}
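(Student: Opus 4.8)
The plan is to convert the time-fractional inverse problem into a one-parameter family of inverse boundary value problems for elliptic Schr\"odinger-type operators, indexed by the Laplace variable, and then to invoke the known partial-data Calder\'on results in each dimension. First I would record that for each fixed $g \in \Hin$ the map $t \mapsto \mathcal N_{\alpha,\rho,q}(t) g$ is analytic on $(0,+\infty)$. To see this I would lift the inhomogeneous Dirichlet datum by choosing $G = G_g \in H^2(\Omega)$ solving $\cA_q G = 0$ in $\Omega$, $G = g$ on $\partial\Omega$ (possible since $\partial\Omega$ is $\cC^{1,1}$ and $g \in H^{3\slash2}(\partial\Omega)$), and by writing $u_g(t,\cdot) = t^k G + w(t,\cdot)$. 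Then, using $\partial_t^{\alpha(x)} t^k = \frac{\Gamma(k+1)}{\Gamma(k+1-\alpha(x))} t^{k-\alpha(x)}$ and $\cA_q G = 0$, the remainder $w$ solves \eqref{eq1} with homogeneous boundary condition, zero initial datum (here $k \geq 2$ guarantees $u_g(0,\cdot)=0$), and source
\bel{source}
f(t,x) = -\rho(x)\frac{\Gamma(k+1)}{\Gamma(k+1-\alpha(x))} t^{k-\alpha(x)} G(x),
\ee
which has at most polynomial growth in $t$ and hence meets the hypotheses of Theorem \ref{t1}. The contour-integral representation \eqref{t1a} of $w$ is analytic in $t\in(0,+\infty)$ with values in $L^2(\Omega)$, and together with the $H^2$-regularity furnished by Proposition \ref{pr1} this makes $t \mapsto \partial_\nu u_g(t,\cdot)_{|\Go}$ analytic.

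Next, condition \eqref{cond1} and the identity theorem for vector-valued analytic functions upgrade \eqref{It2a} to $\mathcal N_{\alpha_1,\rho_1,q_1}(t) = \mathcal N_{\alpha_2,\rho_2,q_2}(t)$ for every $t \in (0,+\infty)$. I would then apply the Laplace transform in $t$: since $u_g(0,\cdot)=0$, one has $\widehat{\partial_t^{\alpha(x)} u_g}(p,x) = p^{\alpha(x)} \hat u_g(p,x)$, so for each $p>0$ the function $v_p := \frac{p^{k+1}}{\Gamma(k+1)}\hat u_g(p,\cdot)$ solves the elliptic problem $(\cA_q + \rho(x) p^{\alpha(x)}) v_p = 0$ in $\Omega$ with $v_p = g$ on $\partial\Omega$, the Laplace transform of $t^k$ being $\Gamma(k+1) p^{-k-1}$. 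Because $q \geq 0$, $\rho \geq \rho_0 > 0$ and $p^{\alpha(x)} > 0$, the operator $\cA_q + \rho p^{\alpha}$ is positive, so $0$ is not a Dirichlet eigenvalue and the partial Dirichlet-to-Neumann map $\Lambda_p : \Hin \ni g \mapsto \partial_\nu v_p{}_{|\Go}$ is well defined. Equating the Laplace transforms of the two boundary operators yields $\Lambda_{p,1} = \Lambda_{p,2}$ on $\Hin$ for all $p>0$.

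Under \eqref{eq-DLaplace} this is the partial-data inverse problem for the Schr\"odinger operator $-\Delta + V_{p,j}$ with potential $V_{p,j}(x) = q_j(x) + \rho_j(x) p^{\alpha_j(x)}$; note that $V_{p,j} \in L^\infty(\Omega)$ in Case $(ii)$ and, since $\nabla(p^{\alpha}) = (\ln p)\, p^{\alpha}\nabla\alpha$, that $V_{p,j} \in W^{1,r}(\Omega)$ in Case $(i)$, so the regularity imposed in $\cE_2$ is exactly what is needed. For each fixed $p>0$ I would invoke the two-dimensional partial-data theorem of Imanuvilov--Uhlmann--Yamamoto (which allows $\Gi=\Go=\gamma$ arbitrary) in Case $(i)$, and the Kenig--Sj\"ostrand--Uhlmann theorem in Case $(ii)$, whose front/back-face geometric hypothesis is precisely the condition imposed on $\Gi,\Go$ through the point $x_0$ outside the convex hull of $\overline{\Omega}$. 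Both yield $V_{p,1}=V_{p,2}$ a.e.\ in $\Omega$, i.e.
\bel{sep}
q_1(x) - q_2(x) = \rho_2(x) p^{\alpha_2(x)} - \rho_1(x) p^{\alpha_1(x)},\ x \in \Omega,
\ee
for all $p>0$ outside a single $p$-independent null set (obtained by taking the union over rational $p$ of the exceptional sets and then using continuity of $p \mapsto p^{\alpha(x)}$).

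The final step separates the three coefficients from \eqref{sep}. Fixing such an $x$ and regarding the right-hand side as a function of $p$, the left-hand side is constant in $p$; letting $p\to+\infty$ forces $\alpha_1(x)=\alpha_2(x)$ (otherwise the dominant power $p^{\max(\alpha_1(x),\alpha_2(x))}$, carried by a coefficient $\rho_j(x)\geq\rho_0>0$, makes the right-hand side unbounded), then forces $\rho_1(x)=\rho_2(x)$ (since $p^{\alpha_1(x)}\to+\infty$ as $\alpha_1(x)>0$), and finally $q_1(x)=q_2(x)$, which is the assertion. I expect the genuine obstacle to lie in the first two paragraphs, namely in establishing the analyticity in $t$ and in rigorously justifying that the Laplace transform of $u_g$ solves the elliptic problem with the space-dependent exponent $p^{\alpha(x)}$; this is where the well-posedness and representation theory of the paper is essentially used, whereas the Calder\'on step is quotable and the concluding separation is elementary.
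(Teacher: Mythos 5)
Your proposal is correct and follows essentially the same route as the paper's proof: analyticity of $t \mapsto \partial_\nu u_g(t,\cdot)_{|\Go}$ via the $t^k G$ lifting and the contour representation, the identity theorem combined with \eqref{cond1}, Laplace transformation to a $p$-indexed family of partial-data Calder\'on problems with potentials $q_j+\rho_j p^{\alpha_j}$, the Imanuvilov--Yamamoto ($d=2$) and Kenig--Sj\"ostrand--Uhlmann ($d\geq 3$) uniqueness theorems, and an elementary separation of the three coefficients. The only deviations are cosmetic: you lift the Dirichlet datum by an $\cA_q$-harmonic extension rather than an arbitrary $H^2$ extension, and you separate $\alpha,\rho,q$ from the identity $q_1+\rho_1p^{\alpha_1}=q_2+\rho_2p^{\alpha_2}$ by letting $p\to+\infty$, whereas the paper takes $p\to 0$, then $p=1$, then $p=e$ --- both work equally well.
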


\subsection{Comments and outline}
As the Laplace transform of a solution to CO time-fractional diffusion 
equations is expressed in terms of Mittag-Leffler functions, most of its 
features are inherited from the well known properties of these special 
functions.
As will appear below, this is no longer the case when the fractional order of 
the time-fractional diffusion equation depends on the space variable, which 
makes for a more challenging analysis of the well-posedness of these systems. 
This new technical difficulty translates in particular into the definition of 
a weak solution to VO time-fractional diffusion equations, which extends the 
one of a weak solution to CO time-fractional diffusion equations. Moreover, 
it can be verified from \cite[Definition 1.1]{LKS} that such a definition is 
suitable for DO time-fractional diffusion equations as well.

The paper is organized as follows. In Section \ref{sec-fwd}, 
we discuss the well-posedness of the initial-boundary value problem 
\eqref{eq1}. 
More precisely, the weak solution to the VO time-fractional diffusion equation 
appearing in \eqref{eq1}, is defined in Section \ref{sec-wks}, and Section 
\ref{sec-prt1} proves Theorem \ref{t1}, which is by means of 
a technical resolvent estimate of the elliptic part of the diffusion equation
given in Section \ref{sec-ell}. The proof of the statement of Remark 
\ref{rm-cl} can be found in Section \ref{sec-pr-cl}.

The analysis of the uniqueness result in our inverse problem
is carried out in Section \ref{sec-inv}.  That is, the partial boundary 
operators \eqref{bound} are rigorously defined in Section \ref{sec-wp}, and 
Section \ref{sec-ip} provides the proof of Theorem \ref{It2}.


\section{Analysis of the forward problem}
\label{sec-fwd}

\subsection{Elliptic operator: self-adjointness and resolvent estimate}
\label{sec-ell}
Let $A_0$ be the operator generated by the quadratic form
$$ a_0(u) := \sum_{i,j=1}^d \int_{\Omega} a_{i,j}(x) \pd_{x_i} u(x) 
\pd_{x_j} u(x) dx,\ u \in H_0^1(\Omega). $$
Since there exists a constant $\tilde{c}_0>0$ such that
\bel{a6}
a_0(u) \geq c_0 \| \nabla u \|_{L^2(\Omega)^d}^2 
\geq \tilde{c}_0 \| u \|_{H^1(\Omega)}^2,\ u \in H_0^1(\Omega),
\ee
by \eqref{ell} and the Poincar\'e inequality, the operator $A_0$ is 
self-adjoint in $L^2(\Omega)$ and acts as $\cA_0$ on its dense domain $D(A_0)$ 
in $L^2(\Omega)$, that is, $A_0 u = \cA_0 u$ for all $u \in D(A_0)$. 

Put $r:=2 \kappa \slash (\kappa-2)$ and notice from the H\"older inequality 
that 
\bel{a7} 
\| q u \|_{L^2(\Omega)} \leq \| q \|_{L^\kappa(\Omega)} \| u \|_{L^r(\Omega)},
\ u \in L^r(\Omega).
\ee
Furthermore we have $H^1(\Omega)=W^{1,2}(\Omega) \subset W^{r_0,r}(\Omega)$ 
with $r_0 := 1 - d \slash \kappa \in (0,1)$ by the Sobolev embedding theorem 
(e.g., \cite[Theorem 1.4.4.1]{Gr}), and the embedding is continuous:
\bel{a8} 
\exists c>0,\ \| u \|_{W^{r_0,r}(\Omega)} \leq c \| u \|_{H^1(\Omega)},\ 
u \in H^1(\Omega).
\ee
Therefore, by \eqref{a7}-\eqref{a8}, we have $\| q u \|_{L^2(\Omega)} \leq 
c \| q \|_{L^\kappa(\Omega)} \| u \|_{H^1(\Omega)}$ 
for every $u \in H^1(\Omega)$, and so it follows from \eqref{a6} that
\beas
\| q u \|_{L^2(\Omega)}^2 & \leq & \frac{c^2 \| q \|_{L^\kappa(\Omega)}^2}
{\tilde{c}_0^2} \langle A_0 u , u \rangle_{L^2(\Omega)} \\
& \leq & \frac{c^2 \| q \|_{L^\kappa(\Omega)}^2}{2 \tilde{c}_0^2} 
\left( \epsilon \| A_0 u \|_{L^2(\Omega)}^2 + \epsilon^{-1} 
\| u \|_{L^2(\Omega)}^2 \right),\ u \in D(A_0),\ \epsilon \in (0,+\infty).
\eeas
Thus, taking $\epsilon>0$ so small that 
$\epsilon c^2 \| q \|_{L^\kappa(\Omega)}^2 < 2 \tilde{c}_0^2$, we see that 
the multiplier by $q$ in $L^2(\Omega)$ is $A_0$-bounded with relative bound 
zero.  
As a consequence, $A_q:=A_0+q$ is self-adjoint in $L^2(\Omega)$ with domain 
$D(A_q)=D(A_0)$  
by the Kato-Rellich theorem (see e.g., \cite[Theorem V.4.3]{Kato}, 
\cite[Theorem X.12]{RS2}). 
Moreover $A_q$ acts as $\cA_q$ on $D(A_q)=D(A_0)$.

In this article, we suppose \eqref{a9} in such a way that 
$A_q \geq \tilde{c}_0$ in the operator sense, where $\tilde{c}_0$ is 
the constant appearing in \eqref{a6}.
This hypothesis is quite convenient for proving Proposition \ref{l1} below 
stated, which is essential for the proof of Theorem \ref{t1} and 
Proposition \ref{pr1}, but it could be removed at the price of greater 
unessential technical difficulties. Nevertheless, for simplicity, 
we shall not go further into this direction. 

\begin{prop}
\label{l1}
For all $p \in \mathbb C \setminus \R_-$, the operator 
$A_q+\rho(x)p^{\alpha(x)}$ is boundedly invertible in $L^2(\Omega)$ and 
$(A_q+\rho(x)p^{\alpha(x)})^{-1}$ maps $L^2(\Omega)$ into $D(A_0)$. 
Moreover  
\bel{l1a}
\norm{(A_q+ \rho(x)r^{\alpha(x)} e^{i \beta \alpha(x)})^{-1}}
_{\cB(L^2(\Omega))} 
\leq C(r,\beta) \max_{j=0,M} r^{-\alpha_j},\ r \in (0,+\infty), \
\beta \in (-\pi,\pi)
\ee
with
\bel{l1b}
C(r,\beta):=\left\{ \begin{array}{cl} 2 \rho_0^{-1}, & \mbox{if}\ | \beta | 
\leq  \theta_*(r), \\
\rho_0^{-1} c_*(\beta), & \mbox{otherwise}, \end{array} \right.
\ee
and
\bel{l1c}
\theta_*(r):= \alpha_M^{-1} \underset{\sigma=\pm 1}{\min} 
\arctan \left( \frac{\rho_0}{3 \rho_M} r^{\sigma (\alpha_M - \alpha_0)} 
\right),\
c_*(\beta) := \max_{j=0,M} |\sin(\alpha_j \beta)|^{-1}.
\ee
Furthermore the mapping $p \mapsto (A_q+\rho(x)p^{\alpha(x)})^{-1}$ is bounded 
holomorphic in $\mathbb C \setminus \R_-$ as operator with values in 
$\mathcal{B}(L^2(\Omega))$.
\end{prop}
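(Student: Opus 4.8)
The plan is to derive every assertion from a single \emph{a priori} sectoriality estimate, from which the invertibility and the bound \eqref{l1a} follow simultaneously, the holomorphy being obtained afterwards by a perturbation argument. Writing $p = r e^{i \beta}$ with $r \in (0,+\infty)$ and $\beta \in (-\pi,\pi)$, so that $p^{\alpha(x)} = r^{\alpha(x)} e^{i \beta \alpha(x)}$, I would fix $u \in D(A_q) = D(A_0)$, set $v := (A_q + \rho(x) p^{\alpha(x)}) u$, and compute
\begin{equation*}
\langle v , u \rangle_{L^2(\Omega)} = \langle A_q u , u \rangle_{L^2(\Omega)} + \int_\Omega \rho(x) r^{\alpha(x)} e^{i \beta \alpha(x)} |u(x)|^2 \, dx .
\end{equation*}
Since $A_q \geq \tilde{c}_0 > 0$, the term $\langle A_q u , u \rangle_{L^2(\Omega)}$ is real and nonnegative, and splitting the identity into real and imaginary parts isolates the two mechanisms controlling $\| u \|_{L^2(\Omega)}$, governed respectively by $\cos(\beta \alpha(x))$ and $\sin(\beta \alpha(x))$. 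I would repeatedly use the elementary bound $r^{\alpha(x)} \geq \min_{j=0,M} r^{\alpha_j} = \big( \max_{j=0,M} r^{-\alpha_j} \big)^{-1}$ together with $\rho(x) \geq \rho_0$.

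Next I would prove \eqref{l1a} by distinguishing the two regimes of \eqref{l1b}. For $|\beta| \leq \theta_*(r)$ I would work with the real part. The key point is that, by the very definition \eqref{l1c} of $\theta_*(r)$ together with $\rho_0 \slash (3 \rho_M) \leq 1 \slash 3$, one has $\alpha_M \theta_*(r) \leq \arctan(1 \slash 3) < \pi \slash 3$, whence $\cos(\beta \alpha(x)) \geq \cos(\alpha_M \theta_*(r)) \geq 1 \slash 2$ uniformly in $x$ (using $|\beta \alpha(x)| \leq \alpha_M |\beta|$ and monotonicity of $\cos$). Dropping the nonnegative term $\langle A_q u, u\rangle_{L^2(\Omega)}$ then yields
\begin{equation*}
\| v \|_{L^2(\Omega)} \| u \|_{L^2(\Omega)} \geq \re \langle v , u \rangle_{L^2(\Omega)} \geq \frac{\rho_0}{2} \Big( \max_{j=0,M} r^{-\alpha_j} \Big)^{-1} \| u \|_{L^2(\Omega)}^2 ,
\end{equation*}
that is \eqref{l1a} with $C(r,\beta) = 2 \rho_0^{-1}$. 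For $|\beta| > \theta_*(r)$ I would instead use the imaginary part. Taking $\beta > 0$ without loss of generality (the case $\beta < 0$ following by complex conjugation), the quantity $\beta \alpha(x)$ ranges in $[\beta \alpha_0, \beta \alpha_M] \subset (0, \pi)$, and since $\sin$ is concave on $(0,\pi)$ its minimum over that interval is attained at an endpoint, giving $\sin(\beta \alpha(x)) \geq \min_{j=0,M} \sin(\alpha_j \beta) = c_*(\beta)^{-1}$. Estimating $\im \langle v , u \rangle_{L^2(\Omega)}$ from below as above leads to $\| u \|_{L^2(\Omega)} \leq \rho_0^{-1} c_*(\beta) \max_{j=0,M} r^{-\alpha_j} \| v \|_{L^2(\Omega)}$, which is \eqref{l1a} in the second regime.

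With the estimate \eqref{l1a} established for every $u \in D(A_q)$, invertibility would follow by a standard closed-range argument. The operator $T(p) := A_q + \rho(x) p^{\alpha(x)}$ is closed on $D(A_q)$, being a bounded perturbation of the self-adjoint $A_q$, and \eqref{l1a} shows it is injective with closed range. Its adjoint is $T(p)^* = A_q + \rho(x) \overline{p}^{\,\alpha(x)} = T(\overline{p})$, and since $\overline{p} \in \C \setminus \R_-$ as well, the same estimate applies to it; hence $T(p)^*$ is injective and $\mathrm{Ran}(T(p)) = \ker(T(p)^*)^\perp = L^2(\Omega)$. Thus $T(p) \colon D(A_q) \to L^2(\Omega)$ is bijective, its inverse maps $L^2(\Omega)$ into $D(A_q) = D(A_0)$, and \eqref{l1a} bounds its norm.

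Finally, for the holomorphy I would note that $p \mapsto \rho(x) p^{\alpha(x)}$ is a norm-holomorphic family of bounded multiplication operators on $L^2(\Omega)$: the difference quotients $\rho(x)(p^{\alpha(x)} - p_0^{\alpha(x)})\slash(p - p_0)$ converge to $\rho(x) \alpha(x) p_0^{\alpha(x)-1}$ uniformly in $x$, precisely because $\alpha$ takes values in the compact set $[\alpha_0,\alpha_M]$ and $s \mapsto p^s$ has locally uniform derivatives there; together with the local bound $\| \rho(x) p^{\alpha(x)} \|_{\mathcal{B}(L^2(\Omega))} \leq \rho_M \max( r^{\alpha_0}, r^{\alpha_M})$ this makes $p \mapsto T(p)$ a holomorphic family of type (A). The resolvent identity $T(p)^{-1} - T(p_0)^{-1} = T(p)^{-1} \big( \rho(x)(p_0^{\alpha(x)} - p^{\alpha(x)}) \big) T(p_0)^{-1}$ then transfers holomorphy to $p \mapsto T(p)^{-1} \in \mathcal{B}(L^2(\Omega))$, which is bounded on compact subsets of $\C \setminus \R_-$ by \eqref{l1a}. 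I expect the main obstacle to be the second paragraph: pinning down the sharp constants of \eqref{l1b}--\eqref{l1c} hinges on the uniform-in-$x$ lower bounds for $\cos(\beta \alpha(x))$ and $\sin(\beta \alpha(x))$, which is exactly where the space dependence of the order $\alpha$ forces the extremal values $\alpha_0, \alpha_M$ and the concavity of $\sin$ to enter, in contrast with the constant-order situation.
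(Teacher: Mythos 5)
Your proposal is correct, reaches the same constants \eqref{l1b}--\eqref{l1c}, and rests on the same structural facts as the paper --- positivity of $A_q$, the uniform lower bound on $\cos(\beta\alpha(x))$ forced by the definition of $\theta_*(r)$, and concavity of $\sin$ on $(0,\pi)$ giving $\sin(\beta\alpha(x))\geq\min_{j=0,M}\sin(\alpha_j\beta)$ --- but the mechanics are genuinely different. Writing $T(p):=A_q+\rho(x)p^{\alpha(x)}$, you run a single numerical-range computation, bounding $\re\langle T(p)u,u\rangle_{L^2(\Omega)}$ from below when $|\beta|\leq\theta_*(r)$ and $\left|\im\langle T(p)u,u\rangle_{L^2(\Omega)}\right|$ from below otherwise, and then upgrade the a priori estimate to bijectivity via closedness of the range and the identity $T(p)^*=T(\overline{p})$. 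The paper instead argues by operator factorizations: for general $\beta$ it writes $A_q+\rho(x)p^{\alpha(x)}=U_\beta\left(U_\beta^{-1}B_{q,p}U_\beta^{-1}+i\right)U_\beta$, where $U_\beta$ is multiplication by $\left(\rho(x)r^{\alpha(x)}\sin(\beta\alpha(x))\right)^{1\slash2}$ and $B_{q,p}$ is the self-adjoint part, exploiting that a self-adjoint operator plus $i$ has inverse of norm at most $1$; in the regime $|\beta|\leq\theta_*(r)$ it uses the Neumann series for $B_{q,p}(I+iB_{q,p}^{-1}U_\beta^2)$ with $\|B_{q,p}^{-1}U_\beta^2\|_{\cB(L^2(\Omega))}<1$. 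The factorization delivers invertibility, the mapping property into $D(A_0)$ and the bound in one stroke, with no duality step; your route is more elementary and more uniform, since working with $\min_{j=0,M}r^{\alpha_j}$ handles $r<1$ and $r\geq1$ simultaneously (the paper only writes out $r\geq 1$ and asserts the other case is analogous). For holomorphy the paper invokes Kato's analytic families of sesquilinear forms of type (a), whereas you note that $p\mapsto\rho(x)p^{\alpha(x)}$ is a norm-holomorphic family of bounded multipliers (type (A)) and conclude from the resolvent identity; since the perturbation is bounded, your argument is the lighter one, the form-theoretic route being the one that would generalize to unbounded perturbations. One small point to make explicit: the local boundedness of $\|T(p)^{-1}\|_{\cB(L^2(\Omega))}$ that your resolvent-identity step needs does follow from \eqref{l1a}, but only after observing that on a compact subset of $\C\setminus\R_-$ the threshold $\theta_*(r)$ stays bounded away from $0$, so either the constant bound $2\rho_0^{-1}$ applies or $|\beta|$ is bounded away from $0$ and $c_*(\beta)$ stays finite; alternatively a Neumann-series perturbation around $p_0$ gives it directly.
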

\begin{proof} 
We shall prove \eqref{l1a} only for $r \in [1,+\infty)$, because the 
corresponding estimate for $r \in (0,1)$ can be derived in the same way.

a) Firstly we assume that $\beta \in (0,\pi)$.  The case of 
$\beta \in (-\pi,0)$ is similarly treated. 
We define a multiplication operator $U_\beta$ in $L^2(\Omega)$ by 
$U_{\beta}f(x) = u_{\beta}(x)f(x)$ for $f \in L^2(\Omega)$, where
$$ 
u_\beta(x) := \left( \rho(x) r^{\alpha(x)} \sin(\beta \alpha(x) 
\right)^{1 \slash 2},\ x \in \Omega. 
$$
Then $i U_\beta^2$ is the skew-adjoint part of the operator $A_q+\rho(x) 
r^{\alpha(x)} e^{i \beta \alpha(x)}$.
Putting $m_\beta:=\min_{j=0,M} \sin(\alpha_j \beta)$, we have 
$$ 
0 < \rho_0^{1\slash2} m_\beta^{1 \slash 2} r^{\alpha_0 \slash 2} 
\leq u_\beta(x) \leq \rho_M^{1 \slash 2} r^{\alpha_M \slash 2},\ x \in \Omega.
$$
Hence the self-adjoint operator $U_\beta$ is bounded and boundedly invertible 
in $L^2(\Omega)$ and
\bel{a11}
\| U_\beta^{-1} \|_{\cB(L^2(\Omega))} \leq \rho_0^{-1\slash2} m_\beta^{-1 \slash 2} r^{-\alpha_0 \slash 2}.
\ee
Moreover, for each $p=r e^{i \beta}$, it holds true that
\bel{a12}
A_q + \rho(x)p^{\alpha(x)} = U_\beta \left( U_\beta^{-1} B_{q,p} U_\beta^{-1} + i \right) U_\beta,
\ee
where $B_{q,p}:= A_q + \rho(x)r^{\alpha(x)} \cos(\beta \alpha(x))$ denotes 
the self-adjoint part of the operator $A_q + \rho(x)p^{\alpha(x)}$, i.e.,
$$ 
A_q + \rho(x)p^{\alpha(x)} = B_{q,p} + i U_\beta^2. 
$$
Indeed, the multiplication operator by $\rho(x)r^{\alpha(x)} 
\cos(\beta \alpha(x))$ is bounded by $\rho_M r^{\alpha_M}$ in 
$L^2(\Omega)$, and the operator $B_{q,p}$ is self-adjoint in $L^2(\Omega)$ 
with domain $D(A_q)=D(A_0)$, by the Kato-Rellich theorem. 
Thus $U_\beta^{-1} B_{q,p} U_\beta^{-1}$ is self-adjoint in $L^2(\Omega)$ 
as well with domain $U_\beta D(A_0)$.
Therefore, the operator $U_\beta^{-1} B_{q,p} U_\beta^{-1}+i$ is invertible in 
$L^2(\Omega)$ and satisfies the estimate
$$ 
\| ( U_\beta^{-1} B_{q,p} U_\beta^{-1}+i )^{-1} \|_{\cB(L^2(\Omega))} \leq 1. 
$$
It follows from this and \eqref{a12} that $A_q+\rho(x)p^{\alpha(x)}$ is 
invertible in $L^2(\Omega)$ with
$$ 
(A_q+\rho(x)p^{\alpha(x)})^{-1} = U_\beta^{-1} (U_\beta^{-1} B_{q,p} 
U_\beta^{-1} +i )^{-1} U_\beta^{-1}, 
$$
showing that $(A_q+\rho(x)p^{\alpha(x)})^{-1}$ maps $L^2(\Omega)$ into 
$U_\beta^{-1} D(U_\beta^{-1} B_{q,p} U_\beta^{-1})=D(A_0)$.  As a consequence, 
we infer from \eqref{a11} that
$$
\| (A_q+ \rho(x) p^{\alpha(x)})^{-1} \|_{\cB(L^2(\Omega))} 
\leq \| (U_\beta^{-1} B_{q,p} U_\beta^{-1}+i )^{-1} \|_{\cB(L^2(\Omega))} 
\| U_\beta^{-1} \|_{\cB(L^2(\Omega))}^2 
\leq \rho_0^{-1} m_\beta^{-1} r^{-\alpha_0}.
$$

b) It follows from $r \in [1,+ \infty)$ that $\alpha_M \theta_*(r) 
= \arctan ( \rho_0 \rho_M^{-1} r^{-(\alpha_M-\alpha_0)} \slash 3) \in 
(0, \pi \slash 6)$. Thus we have
$\cos( \alpha_M \theta_*(r)) \slash 3 = \rho_0^{-1} 
\rho_M r^{\alpha_M-\alpha_0} \sin(\alpha_M \theta_*(r))$ and
\\ 
$2 \cos( \alpha_M \theta_*(r)) \slash 3 > 1 \slash \sqrt{3} > 1 \slash 2$, 
which entails
\bel{a12b} 
\cos (\alpha_M \theta_*(r)) \geq  \frac{1}{2} + \rho_0^{-1} 
\rho_M r^{\alpha_M-\alpha_0} \sin(\alpha_M \theta_*(r)).
\ee

Next, for each $\beta \in [-\theta_*(r),\theta_*(r)]$, we have $\alpha(x) | 
\beta | \leq \alpha_M  \theta_*(r) < \pi \slash 2$, and hence
$\cos(\alpha(x) \beta) \geq \cos(\alpha_M \theta_*(r))>0$. 
From this, \eqref{ell} and \eqref{a9} it follows that
$$ 
B_{q,p} \geq \rho_0 r^{\alpha_0} \cos(\alpha_M \theta_*(r)) 
$$
in the operator sense. Therefore $B_{q,p}$ is boundedly invertible in 
$L^2(\Omega)$ and
\bel{a13}
\| B_{q,p}^{-1} \|_{\cB(L^2(\Omega))} \leq \frac{\rho_0^{-1}
r^{-\alpha_0}}{\cos(\alpha_M \theta_*(r))}.
\ee
Similarly, by $|\sin(\alpha(x) \beta)| \leq \sin(\alpha_M \theta_*(r))$, 
we obtain that 
$$ 
\| U_\beta \|_{\cB(L^2(\Omega))} \leq \rho_M^{1 \slash 2} 
r^{\alpha_M \slash 2} \sin(\alpha_M \theta_*(r))^{1 \slash 2}. 
$$ 
This and \eqref{a13} yield
$$ 
\| B_{q,p}^{-1} U_\beta^2 \|_{\cB(L^2(\Omega))} \leq \| B_{q,p}^{-1} \|
_{\cB(L^2(\Omega))} \| U_\beta \|_{\cB(L^2(\Omega))}^2 \leq \rho_0^{-1} 
\rho_M r^{\alpha_M - \alpha_0} \tan(\alpha_M \theta_*(r))  < 1. 
$$
Furthermore, using that $A_q+\rho(x) p^{\alpha(x)} 
= B_{q,p} ( I + i B_{q,p}^{-1} U_\beta^2 )$, where $I$ denotes the identity 
operator in $L^2(\Omega)$, we see that $A_q+\rho(x) p^{\alpha(x)}$ is 
invertible in $L^2(\Omega)$ with
\beas
 \|  ( A_q+\rho(x)p^{\alpha(x)} )^{-1} \|_{\cB(L^2(\Omega))} 
& \leq&  \frac{\| B_{q,p}^{-1} \|_{\cB(L^2(\Omega))}}{1 - \| B_{q,p}^{-1} 
U_\beta^2 \|_{\cB(L^2(\Omega))}}\\
 & \leq&  \frac{\rho_0^{-1} r^{-\alpha_0}}{\cos(\alpha_M \theta_*(r)) 
- \rho_0^{-1} \rho_M  r^{\alpha_M-\alpha_0} \sin(\alpha_M \theta_*(r))}. 
 \eeas
This and \eqref{a12b} entail that 
$\| ( A_q+\rho(x)p^{\alpha(x)} )^{-1} \|_{\cB(L^2(\Omega))} \leq 2 \rho_0^{-1} 
r^{-\alpha_0}$.

c) We turn now to proving that $p \mapsto (A_q+\rho(x)p^{\alpha(x)})^{-1}$ is 
bounded holomorphic in $\mathbb C \setminus \R_-$.
To this purpose, we introduce the closed sequilinear form 
$$ 
a_{q,p}(u) := a_0(u) + \int_{\Omega} (q(x) +\rho(x)p^{\alpha(x)}) 
| u(x) |^2 dx,\ u \in H_0^1(\Omega), 
$$
which is associated with the operator $A_q+\rho(x) p^{\alpha(x)}$ in 
$L^2(\Omega)$. 
In light of \eqref{a6} and \eqref{a9}, we have
$$ 
\re{a_{q,p}(u)} \geq \left( \tilde{c}_0 - \rho_M \max_{j=0,M} | p |^{\alpha_j} 
\right) \| u \|_{L^2(\Omega)}^2\ \mbox{and}\ \im{a_{q,p}(u)} \leq \left(  
\rho_M \max_{j=0,M} | p |^{\alpha_j} \right) \| u \|_{L^2(\Omega)}^2 
$$
for all $u \in H_0^1(\Omega)$, entailing that $a_{q,p}$ is sectorial 
for every $p \in \mathbb C \setminus \R_-$. 
Here and henceforth $\re$ and $\im$ mean the real part and the imaginary 
part of a complex number under consideration, respectively.

Moreover, since $p \mapsto a_{q,p}(u)$ is holomorphic in $\mathbb C \setminus 
\R_-$, we see that 
$\{ a_{q,p};\thinspace p \in \mathbb C \setminus \R_- \}$ is an analytic 
family of sesquilinear forms of type (a) in the sense of Kato 
(see \cite[Section VII.4.2]{Kato}). 
From \cite[Theorem VII.4.2]{Kato} it follows that 
$\{ A_q + \rho(x) p^{\alpha(x)};\ p \in \mathbb C \setminus \R_- \}$ is an analytic family of operators. 
Therefore $p \mapsto (A_q + \rho(x) p^{\alpha(x)})^{-1}$ is holomorphic 
in $\mathbb C \setminus \R_-$ by \cite[Theorem VII.1.3]{Kato}.
Thus the proof of Proposition 2.1 is complete.
\end{proof}

We point out that $\theta_*(r)$ behaves likes $\min_{\sigma = \pm 1} 
r^{\sigma(\alpha_M - \alpha_0)}$ as $r$ becomes either sufficiently small or 
sufficiently large (that is, like $r^{-(\alpha_M - \alpha_0)}$ 
as $r \to 0$, and 
like $r^{(\alpha_M - \alpha_0)}$ as $r \to +\infty$). 
Indeed, bearing in mind 
that $\arctan u = \int_0^u \frac{d v}{1+v^2}$ for all $u \in [0,+\infty)$, 
we see that 
$\arctan u \in \left[ \frac{u}{1+u^2}, u \right]$, and so
we infer from \eqref{l1c} that
$$
\frac{\frac{\rho_0}{3  \rho_M} \min_{\sigma = \pm 1} r^{\sigma(\alpha_
M - \alpha_0)}}{1 + \frac{\rho_0^2}{9  \rho_M^2} \min_{\sigma = \pm 1} 
r^{2 \sigma(\alpha_M - \alpha_0)}} \leq \alpha_M \theta_*(r) 
\leq \frac{\rho_0}{3  \rho_M} \min_{\sigma = \pm 1} 
r^{\sigma(\alpha_M - \alpha_0)},\ r \in (0,+\infty).
$$
Since $\min_{\sigma = \pm 1} r^{\sigma(\alpha_M - \alpha_0)} \in (0,1]$, 
the denominator of the left-hand side of the above inequality is majorized 
by $1 + \frac{\rho_0^2}{9  \rho_M^2}  \leq \frac{10}{9}$, so that we have
$$
\frac{3 \rho_0}{10  \rho_M} \min_{\sigma = \pm 1} 
r^{\sigma(\alpha_M - \alpha_0)} \leq \alpha_M \theta_*(r) \leq \frac{\rho_0}
{3  \rho_M} \min_{\sigma = \pm 1} r^{\sigma(\alpha_M - \alpha_0)},\ r 
\in (0,+\infty).
$$
Therefore it follows readily from \eqref{l1b}-\eqref{l1c} and the inequality 
$\sin u \geq \frac{u}{2}$ for all $u \in [0,1]$ that
$$
C(r,\beta) \leq \rho_0^{-1} \left( \sin \left (\frac{3 \alpha_0 \rho_0}
{10  \alpha_M \rho_M} \min_{\sigma=\pm 1} r^{\sigma(\alpha_M-\alpha_0)} 
\right) \right)^{-1} 
\leq \frac{20 \alpha_M \rho_M}{3 \alpha_0 \rho_0^2} \max_{\sigma=\pm 1} 
r^{\sigma(\alpha_M-\alpha_0)},
$$
when $r$ is sufficiently close to either $0$ or $+\infty$. As a consequence, 
there exists a constant $C>0$, which is independent of $r$ and $\beta$ 
such that we have
\bel{a14}
C(r,\beta) \leq C \max_{\sigma=\pm 1} r^{\sigma(\alpha_M-\alpha_0)},\ r \in 
(0,+\infty),\ \beta \in (-\pi,\pi). 
\ee

\subsection{Weak solution}
\label{sec-wks}
Let $\cS'(\R;L^2(\Omega))$ be the space dual to $\cS(\R;L^2(\Omega))$. 
We denote by
$\cS'(\R_+;L^2(\Omega)):=\{v \in \cS'(\R;L^2(\Omega));\ \mbox{supp}
\ v \subset [0,+\infty) \times \overline{\Omega}\}$ the set of distributions 
in $\cS'(\R;L^2(\Omega))$ supported in $[0,+\infty) \times \overline{\Omega}$.
Otherwise stated, $v \in \cS'(\R;L^2(\Omega))$ lies in $\cS'(\R_+;L^2(\Omega))$
if and only if
$\langle v , \varphi \rangle_{\cS'(\R;L^2(\Omega)),\cS(\R;L^2(\Omega))} =0$
whenever $\varphi \in \cS(\R;L^2(\Omega))$ 
vanishes in $\R_+ \times \overline{\Omega}$. As a consequence, for a.e. 
$x\in\Omega$, we have
\bel{a1}
\langle v(\cdot,x) , \varphi \rangle_{\cS'(\R),\cS(\R)}=\langle v(\cdot,x), 
\psi \rangle_{\cS'(\R),\cS(\R)},\ \ \varphi , \psi \in \cS(\R),
\ee
provided $\varphi=\psi$ in $\R_+ $. 
Furthermore we say that $\varphi \in \cS(\R_+)$ if $\varphi$ is the 
restriction to $\R_+$ of a function $\tilde{\varphi}\in \cS(\R)$. 
Then we set
\bel{a2} 
x\mapsto\langle v(\cdot,x) , \varphi \rangle_{\cS'(\R_+),\cS(\R_+)} 
:= x\mapsto\langle v(\cdot,x)  , \tilde{\varphi} \rangle_{\cS'(\R),\cS(\R)},
\ v \in \cS'(\R_+;L^2(\Omega)).
\ee
Notice from \eqref{a1} that $\tilde{\varphi}$ may be any function in $\cS(\R)$ 
such that $\tilde{\varphi}(t)=\varphi(t)$ for all $t \in \R_+$.

For $p \in \mathbb C_+:=\{z\in\mathbb C;\ \re z>0\}$, we put
$$ 
e_p(t) := \exp (-p t ),\ t \in \R_+. 
$$
Evidently, $e_p \in \cS(\R_+)$. For $v \in \cS'(\R_+;L^2(\Omega))$, 
we define the Laplace transform  $\cL[v]$ in $t$ of $v$ by
$$
\cL [v](p) := x\mapsto\langle v(\cdot,x) , e_p \rangle_{\cS'(\R_+),\cS(\R_+)},
\ p \in \mathbb C_+,
$$ 
and notice that $p \mapsto \cL [v](p) \in \cC^\infty(\mathbb C_+;L^2(\Omega))$.
Having seen this, we define the weak solution to \eqref{eq1} as follows.

\begin{defn}
\label{d1} 
Let $u_0 \in L^2(\Omega)$. For $T<+\infty$, we assume that 
$f \in L^1(0,T;L^2(\Omega))$ and, for $T=+\infty$, we assume that there exists 
$m\in\mathbb N$ such that $(1+t)^{-m}f\in L^1(\R_+;L^2(\Omega))$. We say that 
$u$ is a weak solution to \eqref{eq1} if $u$ is the restriction to $Q$ of 
a distribution $v\in \cS'(\R_+;L^2(\Omega))$ and 
the Laplace transform $V:=\cL[v]$ verifies
\bel{a3}
V(p):=\left( A_q +\rho(x)p^{\alpha(x)} \right)^{-1} 
\left(  F(p)+\rho(x)p^{\alpha(x)-1}u_0 \right),\ p \in (0,+\infty).
\ee 

Here $F(p):=\cL[f(t,\cdot)1_{(0,T)}(t)](p)=\int_0^{T}e^{-pt}f(t,\cdot)dt$, 
where $1_{I}$ denotes the characteristic function of a set $I \subset \R$.
\end{defn}

\begin{rmk} 
Notice from \eqref{a3} and Lemma \ref{l1} that $V(p) \in D(A_q) = D(A_0) 
\subset H_0^1(\Omega)$ for all $p \in (0,+\infty)$, which entails that 
$V(p) = 0$ on $\pd \Omega$. 
Actually it is clear that \eqref{a3} can be 
equivalently replaced by the condition
$$
\left\{ \begin{array}{rcll} 
(\cA_q +\rho(x) p^{\alpha(x)}) V(p) & = & F(p)+\rho(x) p^{\alpha(x)-1}u_0, & \mbox{in}\ \Omega, \\  
V(p) & = & 0, & \mbox{on}\ \pd \Omega
\end{array} \right. 
$$
for all $p \in (0,+\infty)$.
\end{rmk}

\begin{rmk} 
For all $h \in \cC^1(\R^+)$ such that
$$ 
\epsilon_0 := \inf \left\{ \epsilon \in (0,+\infty);\ e^{-\epsilon t} 
\frac{d^k h(t)}{dt^k} \in L^1(\R^+),\ k=0,1 \right\} \in \R_+, 
$$
we know from \cite[Eq. (2.140)]{P} that
$$
\cL[\partial^{\alpha(x)} h](p)=p^{\alpha(x)} H(p)-p^{\alpha(x)-1} h(0),
\ p \in (\epsilon_0,+\infty),
$$
where $H(p):=\cL[h](p)=\int_0^{+\infty} e^{-pt}h(t)dt$. Therefore, in the 
particular case where the mapping $x \mapsto \alpha(x)$ is constant, we 
infer from \cite[Theorems 2.1 and 2.2]{SY} that the initial-boundary value 
problem \eqref{eq1} admits a unique weak solution to \eqref{eq1} in the sense 
of Definition \ref{d1}, provided $u_0$ and $f$ are sufficiently smooth.
\end{rmk}

\subsection{Proof of Theorem \ref{t1}}
\label{sec-prt1}
The proof is divided into two parts: the first one is concerned with $S_0$, 
while the second deals with the operators $S_1$ and $S_2$.\\
1) Let us start with $S_0(t)$. To this purpose, we set
$$ 
W(p):=p^{-2}(A_q+\rho(x)p^{\alpha(x)})^{-1} \rho(x)p^{\alpha(x)-1}u_0,\ 
p \in \mathbb \C \setminus \R_-, 
$$
fix $\mu \in [1,+\infty)$, and infer from \eqref{l1a} that
\bel{ee}
\norm{W(\mu+i\eta)}_{L^2(\Omega)}\leq C(r,\beta) 
\rho_M |\mu+i\eta|^{-3+\alpha_M-\alpha_0},\ \eta \in \R,
\ee
where $C(r,\beta)$ is given by \eqref{l1b}-\eqref{l1c} with $r=|\mu+i\eta| 
\in [1,+\infty)$ and $\beta=\arg(\mu+i\eta) \in \left(-\frac{\pi}{2},
\frac{\pi}{2} \right)$. 
According to \eqref{a14}, there exists a constant $C=C(\alpha_0,\alpha_M,
\rho_0,\rho_M)$ such that $$C(r,\beta)\leq C|\mu+i\eta|^{\alpha_M-\alpha_0},\quad \mu\in[1,+\infty),\ \eta\in\R.$$ 
Thus \eqref{ee} yields 
\bel{ess}
\norm{W(\mu+i\eta)}_{L^2(\Omega)} \leq C \langle \eta \rangle^{-3+2(\alpha_
M-\alpha_0)},\ \mu \in [1,+\infty),\ \eta\in\R,
\ee
upon substituting $C$ for $\rho_M C$. 
As a consequence, we have for each $k=1,2$, 
\bel{sup}
C_k:=\sup_{\mu \in [1,+\infty)} \norm{W(\mu+i \cdot)}_{L^k(\R;L^2(\Omega))} 
=  \sup_{\mu \in [1,+\infty)} \left( \int_{\R} \norm{W(\mu+i \eta)}
_{L^2(\Omega)}^k d \eta \right)^{\frac{1}{k}} <\infty,
\ee
and hence
\bel{bm} 
\omega(t):=  \frac{1}{2i\pi} \int_{i\infty}^{i\infty} e^{t p} W(p+1) dp 
= \frac{1}{2 \pi} \int_{-\infty}^{+\infty} e^{ i t\eta} W(1 + i \eta ) d \eta
\ee
is well defined for all $t \in \R$. Moreover the mapping $p \mapsto e^{t p} 
W(p+1)$ is holomorphic in $\mathbb C \setminus (-\infty,-1]$ 
by Proposition \ref{l1}, and so we infer from the Cauchy formula that
\bel{bm1} 
\omega(t)= \frac{1}{2i\pi} \int_{s-i\infty}^{s+i\infty} e^{t p} W(p+1) dp,
\ s\in(0,+\infty).
\ee
Indeed, for all $R \in (1,+\infty)$ and $s \in (0,+\infty)$, we have
\bel{cau}  
\int_{s-iR}^{s+iR} e^{t p} W(p+1) dp -\int_{-iR}^{iR} e^{t p} W(p+1) dp 
= \sum_{\sigma=\pm 1} \sigma \int_0^s e^{t (\mu + i \sigma R)} W(\mu+1+i 
\sigma R) d\mu 
\ee
from the Cauchy formula, and 
$$
\norm{\int_0^s e^{t (\mu + i \sigma R)} W(\mu +1+i \sigma R) d\mu}
_{L^2(\Omega)} \leq C s \max(1,e^{s t}) \langle R \rangle^{-3+2(\alpha_
M-\alpha_0)},\ \sigma = \pm 1,
$$
by \eqref{ess}.  Hence \eqref{bm1} follows by letting $R$ to $+\infty$ 
in \eqref{cau}. 
Next, in view of \eqref{bm1}, we obtain that
$$
\norm{\omega(t)}_{L^2(\Omega)}= \frac{1}{2\pi} \norm{ \int_{\R} 
e^{t (s + i \eta)} W(s+1+i \eta) d \eta}_{L^2(\Omega)}
\leq \frac{e^{st}}{2 \pi} \sup_{\mu \in [1,+\infty)} \norm{W(\mu+i \cdot)}
_{L^1(\R;L^2(\Omega))} 
$$
for all $t \in\R$ and $s \in (0,+\infty)$, and consequently that
\bel{z1}
\norm{\omega(t)}_{L^2(\Omega)} \leq \frac{C_1}{2 \pi} e^{ts}
\ee
according to \eqref{sup}. Now, letting $s$ to $+\infty$ on the right-hand 
side of \eqref{z1}, we have
\bel{ww}
\omega(t)=0,\ t \in(-\infty,0).
\ee
Similarly, by letting $s$ to $0$ in \eqref{z1}, we find that 
$\norm{\omega(t)}_{L^2(\Omega)}  \leq \frac{C_1}{2 \pi}$ for all 
$t \in [0,+\infty)$. Therefore, we have $\omega\in L^\infty(\R; L^2(\Omega)) 
\cap \cS'(\R_+;L^2(\Omega))$, and since $p \mapsto W(p+1)$ is holomorphic 
in $\mathbb C_+$, we infer from \eqref{sup} with $k=2$, \eqref{bm}, 
Theorem 19.2 and the following remark in \cite{Ru} that  
$\cL[\omega](p)=W(p+1)$ for all $p\in\mathbb C_+$. 
As a consequence, the function
\bel{z2}
w(t):=e^{t} \omega(t) =\frac{1}{2i\pi} \int_{-i\infty}^{i\infty} e^{t (p+1)} 
W(p+1) dp= \frac{1}{2i\pi} \int_{1-i\infty}^{1+i\infty} e^{t p} W(p) dp,
\ t \in \R
\ee
verifies
\bel{lap1}
\cL[w](p)=\cL[\omega](p-1)=W(p),\ p\in\{z\in\mathbb C;\ \re z 
\in (1,+\infty) \}.
\ee
Next \eqref{l1a}--\eqref{l1c} imply 
$$ 
\| W(1+i\eta) \|_{L^2(\Omega)} \leq \rho_0^{-1} \max \left( 2 , 
c_* \left( \frac{\pi}{4} \right) \right) \langle \eta \rangle^{-3 + \alpha_M - 
\alpha_0},\ \eta \in \R \setminus (-1,1),
$$
and we infer from \eqref{sup} with $k=1$ that the mapping 
$\eta \mapsto (1+i\eta) 
W(1+i\eta)\in L^1(\R;L^2(\Omega))$.
Therefore we have
\bel{z3}
y(t):=\pd_t w(t)=\frac{1}{2i\pi} \int_{1-i\infty}^{1+i\infty} e^{t p} pW(p) dp,
\ t \in \R
\ee
by \eqref{z2},  and
\bel{lap2}
\cL[y](p)=p\cL[w](p)=pW(p),\ p\in\{z\in\mathbb C;\ \re z \in (1,+\infty) \}
\ee
from \eqref{lap1}. 
Furthermore, due to \eqref{z3} and the analyticity of the mapping $p \mapsto 
e^{t p} pW(p)$ in $\mathbb C \setminus \R_-$, arising 
from Proposition \ref{l1}, the following identity
\bel{t1b}
y(t)=\frac{1}{2 i \pi} \int_{\gamma(\epsilon,\theta)} e^{t p} pW(p) dp,\ t 
\in \R_+,
\ee
holds for any $\epsilon \in (0,1)$ and $\theta \in \left( \frac{\pi}{2},\pi 
\right)$, where $\gamma(\epsilon,\theta)$ is defined by \eqref{g1}-\eqref{g2}.
Here we used the Cauchy formula and took advantage of the fact that
$$
\lim_{\eta \to +\infty} \int_{\eta ( (\tan \theta)^{-1} \pm i )}
^{1 \pm i \eta} e^{t p} pW(p) d p =0,\ t \in \R_+ 
$$ 
in $L^2(\Omega)$. Indeed, for any sufficiently large $\eta \in (1,+\infty)$ 
and all $t \in \R_+$, \eqref{l1a}--\eqref{l1c} yield
\beas
\left\| \int_{\eta ( (\tan \theta)^{-1} \pm i )}^{1 \pm i \eta} 
e^{t p} pW(p) d p \right\|_{L^2(\Omega)} 
& =  &
\left\| \int_{\eta (\tan \theta)^{-1}}^{1} e^{t (\mu \pm i \eta)} (\mu \pm i 
\eta)W(\mu \pm i \eta) d \mu\right\|_{L^2(\Omega)} \nonumber \\
& \leq & C e^t (1 - \eta (\tan \theta)^{-1}) \eta^{-2+\alpha_M-\alpha_0} 
\| u_0 \|_{L^2(\Omega)}
\eeas
for some positive constant $C$ depending only on $\theta$, $\alpha_0$, 
$\alpha_M$, $\rho_0$ and $\rho_M$.

We turn now to estimating the right-hand side of \eqref{t1b}. First, 
by performing the change of variable $p=\epsilon e^{i \beta}$ with 
$\beta \in (-\theta,\theta)$ in the integral $\int_{\gamma_0(\epsilon,\theta)} 
e^{t p} pW(p) d p$, we derive from \eqref{l1a} and \eqref{a14} that
\bea
\left\| \int_{\gamma_0(\epsilon,\theta)} e^{t p} pW(p) d p 
\right\|_{L^2(\Omega)} 
& \leq & \rho_M \left( \int_{-\theta}^\theta C(\epsilon,\beta) e^{t \epsilon 
\cos \beta} d \beta \right) \epsilon^{-(1+\alpha_M-\alpha_0)} \| u_0 \|
_{L^2(\Omega)} \nonumber \\
& \leq &C e^{t \epsilon} \epsilon^{-(1+2(\alpha_M-\alpha_0))} \| u_0 \|
_{L^2(\Omega)},\ t \in \R_+. \label{a16}
\eea
Next, since \eqref{l1b}-\eqref{l1c} yield the existence of a positive constant 
$C_\theta$ depending only on $\alpha_0$, $\alpha_M$, $\rho_0$, $\rho_M$, and 
$\theta$, such that the estimate $C(r,\theta) \leq C_\theta$ holds uniformly 
in $r \in (0,+\infty)$.  Then it follows from \eqref{l1a} that
\bea
&&\left\| \int_{\gamma_\pm(\epsilon,\theta)} e^{t p} pW(p) d p \right\|
_{L^2(\Omega)} 
\leq \rho_M C_\theta \left( \int_\epsilon^1 r^{-(2+\alpha_M-\alpha_0)} dr 
+ \int_1^{+\infty} r^{-(2-(\alpha_M-\alpha_0))} dr \right) \| u_0 \|
_{L^2(\Omega)} \nonumber \\
& \leq & \frac{\rho_M C_\theta}{1+\alpha_M-\alpha_0} \left( 
\epsilon^{-(1+\alpha_M-\alpha_0)} + \frac{2}{1-(\alpha_M-\alpha_0)} \right) 
\| u_0 \|_{L^2(\Omega)}. \label{a17}
\eea
Now, taking $\epsilon = t^{-1} \in (0,1)$ in \eqref{a16}-\eqref{a17}, we deduce from \eqref{g1} and \eqref{t1b} that
\bel{a18}
\| y(t) \|_{L^2(\Omega)} \leq C t^{1+2(\alpha_M-\alpha_0)} \| u_0 \|
_{L^2(\Omega)},\ t \in (1,+\infty) 
\ee
for some positive constant $C$ depending only on $\theta$, $\alpha_j$ and 
$\rho_j$ for $j=0,M$. Similarly, by choosing $\epsilon=1 \slash 2$ 
in \eqref{a16}-\eqref{a17}, we find that $\| y(t) \|_{L^2(\Omega)} 
\leq C \| u_0 \|_{L^2(\Omega)}$ for all $t \in [0,1]$, 
where $C \in (0,+\infty)$ is independent of $t$. 
Therefore we have $t \mapsto \langle t \rangle^{-3} y(t) \in 
L^\infty(\R;L^2(\Omega))$, and consequently $y\in\cS'(\R_+;L^2(\Omega))$
by \eqref{ww}.
Moreover both functions $p \mapsto \cL[y](p)$ and $p\mapsto pW(p)$ are 
holomorphic in $\mathbb C_+$, and \eqref{lap1} entails  that $\cL[y](p)=pW(p)$ 
for all $p\in\mathbb C_+$, by the unique continuation. 
As a consequence, $v:= \pd_t y \in \cS'(\R_+;L^2(\Omega))$ satisfies 
$\cL[v](p) = p^2 W(p) = (A_q + \rho(x)p^{\alpha(x)})^{-1} \rho(x) 
p^{\alpha(x)-1} u_0$ for every $p \in \C_+$, which shows that $u:=v_{|Q}$ 
is a weak solution to \eqref{eq1} associated with $f=0$. 
Moreover, since $u$ is unique, as can be seen from Definition \ref{d1},
we are left with the task of establishing \eqref{t1a} in the case 
where $f=0$, that is, 
\bel{a19}
u(t) =\frac{1}{2i\pi} \int_{\gamma(\epsilon,\theta)} e^{t p} (A_q + \rho(x)
p^{\alpha(x)})^{-1} \rho(x)p^{\alpha(x)-1} u_0 dp,\ t\in(0,T].
\ee
This equality follows from Proposition \ref{l1} and the identity
$u=\pd_t y$ in $(C_0^\infty)'(0,T;L^2(\Omega))$. 
Indeed, for all $p \in \gamma(\epsilon,\theta)$, the mapping $t \mapsto e^{tp} 
pW(p)$ is continuously differentiable in $(0,T)$, and \eqref{l1a}--\eqref{l1c} 
yield the existence of a constant $C=C(\alpha_0,\alpha_M,\rho_0,\rho_M,
\theta) \in (0,+\infty)$ such that we have
$$ 
\| e^{tp} p^2 W(p) \|_{L^2(\Omega)} \leq C e^{t r \cos \theta} 
\max_{\sigma=\pm 1} r^{-1 + \sigma (\alpha_M-\alpha_0)} \|  u_0 \|
_{L^2(\Omega)},\ p=r e^{\pm i \theta},\ r \in (\epsilon,+\infty).
$$
Moreover, by $\cos \theta \in (-1,0)$, we see that 
$r \mapsto e^{t r \cos \theta} \max_{\sigma=\pm 1} r^{-1 + \sigma (\alpha_
M-\alpha_0)} \in L^1(\epsilon,+\infty)$ for each $t \in (0,T]$, and so 
the integral $\int_{\gamma(\epsilon,\theta)} e^{t p} p^2 W(p) dp$ is 
well-defined. Therefore we obtain \eqref{a19} by this and 
$u \in C((0,T];L^2(\Omega))$.\\

2) We turn now to establishing \eqref{t1a} in the case where $u_0=0$.  
To this purpose, we introduce the following family of operators acting in 
$L^2(\Omega)$,
$$ 
\widetilde{W}(p):=p^{-2}(A_q+\rho(x)p^{\alpha(x)})^{-1} ,\ p \in \mathbb \C 
\setminus \R_-.
$$
For any $\mu \in [1,+\infty)$ and $\eta \in \R$, it follows from \eqref{l1a} 
and \eqref{a14} that $\norm{\widetilde{W}(\mu+i \eta)}_{\cB(L^2(\Omega))}$ 
is majorized by $\langle \eta \rangle^{-2+\alpha_M-2 \alpha_0}$ up to 
some multiplicative constant that is independent of $\eta$ and $\mu$. 
Therefore we have
$$
\sup_{\mu \in [1,+\infty)} \int_\R \norm{\widetilde{W}(\mu+i \eta)}_{\cB(L^2(\Omega))}^k d\eta <\infty,\ k=1,2.
$$
Thus, by arguing exactly in the same way as in the first part of the proof, 
we see that 
\bel{a14b}
S_1(t):=\frac{1}{2 i \pi} \int_{1-i \infty}^{1+i \infty} e^{tp} p 
\widetilde{W}(p) dp,\ t \in \R
\ee
lies in $\cS'(\R_+;\cB(L^2(\Omega)))$, 
\bel{esss1}
t \mapsto \langle t \rangle^{-\alpha_M} S_1(t) 
\in L^\infty(\R;\cB(L^2(\Omega)))
\ee
and 
\bel{lap11}
\cL[S_1\psi](p)=p \widetilde{W}(p)\psi,\ p \in \C_+,\ \psi\in L^2(\Omega).
\ee
By $\tilde{f}$ we denote the extension of a function $f$ by $0$ on 
$(\R \times \Omega) \setminus ((0,T)\times \Omega)$. 
We recall that there exists $\zeta \in \R_+$ such that
\bel{a15}
\langle t \rangle^{-\zeta} \tilde{f} \in L^\infty(\R;L^2(\Omega)),\ 
\ee
and consider the convolution of $S_1$ with $\tilde{f}$, that is, 
$$ 
(S_1*\tilde{f})(t,x) =\int_0^t S_1(t-s) f(s,x) 1_{(0,T)}(s) ds,
\ (t,x) \in \R \times \Omega. 
$$
Evidently, $(S_1*\tilde{f})(t)=0$ for all $t \in \R_-$, and we infer from 
\eqref{esss1} and \eqref{a15} that
\bel{esss2}
\norm{(S_1*\tilde{f})(t)}_{L^2(\Omega)}  \leq \norm{\langle t \rangle
^{-\alpha_M} S_1}_{L^\infty(\R_+;\cB(L^2(\Omega)))} 
\norm{\langle t \rangle^{-\zeta} f}_{L^\infty(\R_+;L^2(\Omega))} 
\langle t \rangle^{1+\alpha_M+\zeta},\ t \in \R_+.
\ee
Therefore $t \mapsto \langle t \rangle^{-1+\alpha_M+\zeta} (S_1 * \tilde{f})(t)
\in L^\infty(\R;L^2(\Omega))$, and consequently 
$S_1*\tilde{f}\in \cS'(\R_+;L^2(\Omega))$.
Moreover, again by \eqref{esss1} and \eqref{a15}, we see that
$$ 
\inf \{\epsilon \in \R_+;\ e^{-\epsilon t} S_1 \in L^1(\R;\cB(L^2(\Omega)))\}
= \inf\{\epsilon \in \R_+;\ e^{-\epsilon t} \tilde{f} \in L^1(\R;L^2(\Omega))\}
=0,
$$
which entails
$$
\cL[S_1*\tilde{f}](p)=\cL[S_1](p) \cL[\tilde{f}](p)=\cL[S_1](p) F(p),\ 
p\in\mathbb C_+,
$$
with $\cL[S_1](p)=\int_0^{+\infty} S_1(t)e^{-pt}dt$ and $\cL[\tilde{f}](p)
= \int_0^{+\infty} \tilde{f}(t)e^{-pt}dt$.
Thus, setting $\tilde{v}:=\pd_t(S_1*\tilde{f})\in \cS'(\R_+;L^2(\Omega))$, 
we derive from \eqref{lap11} that
$$
\cL[\tilde{v}](p) = p\cL[S_1*\tilde{f}](p) = p\cL[S_1](p)F(p)=(A_q + \rho(x)
p^{\alpha(x)})^{-1} F(p),\ p \in \C_+.
$$
It remains to show that
\bel{fin}
\tilde{v}(t)=\int_0^tS_1(t-\tau)\tilde{f}(\tau)d\tau+ S_2\tilde{f}(t),\ 
t \in [0,T].
\ee
This can be done with the aid of \eqref{a14b}, yielding
$$(S_1*\tilde{f})(t)=\frac{1}{2 i \pi} \int_0^t \int_{1-i \infty}^{1+i \infty} 
e^{(t-s)p} p^{-1}(A_q+\rho(x)p^{\alpha(x)})^{-1}\tilde{f}(s) dp ds,\ 
t \in \R_+. $$
Indeed, we notice with a slight adaptation of the reasoning used in the 
derivation of \eqref{t1b} that the integral
$\int_{1-i \infty}^{1+i \infty} e^{(t-s)p} p^{-1}(A_q+\rho(x)p^{\alpha(x)})
^{-1}f(s)dp$ can be replaced on the right-hand side of the above identity by 
$\int_{\gamma(\epsilon,\theta)}e^{(t-s)p} p^{-1}(A_q+\rho(x)p^{\alpha(x)})^{-1}
\tilde{f}(s)dp$ associated with any $\epsilon \in (0,1)$ and 
$\theta \in \left( \frac{\pi}{2},\pi \right)$. 
Therefore we have
$$ 
(S_1 * \tilde{f})(t)=\frac{1}{2 i \pi}\int_0^t\int_{\gamma(\epsilon,\theta)}
e^{(t-s)p} p^{-1}(A_q+\rho(x)p^{\alpha(x)})^{-1}\tilde{f}(s)dp\ ds.
$$
Hence, by \eqref{l1a}, \eqref{a14} and \eqref{a15}, we infer from the Fubini 
theorem that
$$
(S_1*\tilde{f})(t)=\frac{1}{2 i \pi} \int_{\gamma(\epsilon,\theta)} g_q(t,p) 
dp,\ t \in \R_+ 
$$
with
\bel{a15b} 
g_q(t,p):= \int_0^te^{(t-s)p} p^{-1}(A_q+\rho(x)p^{\alpha(x)})^{-1}
\tilde{f}(s)ds,\  p \in \gamma(\epsilon,\theta).
\ee
Therefore, for all $t \in \R_+$ and all $p \in  \gamma(\epsilon,\theta)$, 
we have 
\beas
\pd_t g_q(t,p)
& =& \int_0^te^{(t-s)p} (A_q+\rho(x)p^{\alpha(x)})^{-1}\tilde{f}(s)ds 
+ p^{-1}(A_q+\rho(x)p^{\alpha(x)})^{-1}\tilde{f}(t),
\eeas
and consequently
$$
\norm{\pd_t g_q(t,p)}_{L^2(\Omega)}
\leq \norm{(A_q+\rho(x)p^{\alpha(x)})^{-1}}_{\cB(L^2(\Omega))} 
\left(  \int_0^t e^{s\re p} ds +| p |^{-1} \right) \norm{\tilde{f}}
_{L^\infty(0,t+1;L^2(\Omega))}.
$$
From this and \eqref{l1a}--\eqref{l1c}, it follows that
$$
\norm{\pd_t g_q(t,p)}_{L^2(\Omega)}
\leq  \rho_0^{-1} \max \left( 2, c_*(\theta) \right) 
(1+| \cos \theta |^{-1} )| p |^{-(1+ \alpha_0)} \norm{\tilde{f}}
_{L^\infty(0,t+1;L^2(\Omega))}. 
$$
As a consequence, the mapping $p \mapsto \pd_t g_q(t,p) \in 
L^1(\gamma(\epsilon,\theta);L^2(\Omega))$ for any fixed $t \in \R_+$ 
and $\tilde{v}(t)=\partial_t[S_1*\tilde{f}](t)=\frac{1}{2 i \pi} 
\int_{\gamma(\epsilon,\theta)} \pd_t g_q(t,p) dp$, or equivalently
$$
\tilde{v}(t)=\frac{1}{2 i \pi} \int_{\gamma(\epsilon,\theta)}\left(\int_0^t 
e^{(t-s)p} (A_q+\rho(x)p^{\alpha(x)})^{-1}\tilde{f}(s)ds+p^{-1}(A_q+\rho(x)
p^{\alpha(x)})^{-1}\tilde{f}(t)\right)dp
$$
in virtue of \eqref{a15b}. Now, applying the Fubini theorem to the right-hand 
side of the above identity, we obtain \eqref{fin}. 
This establishes that the restriction to $Q$ of the function expressed 
by the right-hand side of \eqref{fin}, is a weak solution to \eqref{eq1}
associated with $u_0=0$. Evidently such a function lies in 
$\mathcal C([0,T];L^2(\Omega))$. Moreover it is unique from Definition 
\ref{d1}.

Finally, by superposition, the desired result follows readily from 1) and 2).

\subsection{Proof of Remark \ref{rm-cl}}
\label{sec-pr-cl}
We use the notations of Section \ref{sec-prt1}. For $\epsilon \in (0,1)$, 
$\theta \in \left( \frac{\pi}{2}, \pi \right)$ and $R \in [1,+\infty)$, 
we introduce $\gamma_R(\epsilon,\theta):= \{ z \in \gamma(\epsilon,\theta);
\ | z | \in [0,R] \}$ and put 
$\cC_R(\theta) := \{ z \in \C; \ z= R e^{i \beta},\ \beta 
\in [-\theta,\theta] \}$. In light of Proposition \ref{l1}, 
the Cauchy formula yields
$$ 
\int_{\gamma_R(\epsilon,\theta) \cup \cC_R(\theta)^-} p^{-1} (A_q+\rho(x)
p^{\alpha(x)})^{-1} \psi\ dp = 0,\ \psi \in L^2(\Omega), 
$$
where the notation $\cC_R(\theta)^-$ stands for the counterclockwise oriented 
$\cC_R(\theta)$. 
Thus, by letting $R$ to $+\infty$ in the above identity, we obtain
\bel{r1}  
S_2 \psi = \lim_{R \to +\infty} \int_{\cC_R(\theta)} p^{-1} (A_q+\rho(x)
p^{\alpha(x)})^{-1} \psi\ dp,\ \psi \in L^2(\Omega)
\ee
from the definition of $S_2$. Furthermore, for any $R \in [1,+\infty)$, 
from \eqref{l1a} and \eqref{a14}, we have
\bel{r2}
\norm{\int_{\cC_R(\theta)} p^{-1} (A_q+\rho(x)p^{\alpha(x)})^{-1}\ dp}
_{\cB(L^2(\Omega))} \leq C R^{\alpha_M-2 \alpha_0},
\ee
where $C$ is a positive constant which is independent of $R$. 
Since $\alpha_M-2 \alpha_0$ is negative by the assumption, 
we have $S_2 \psi =0$ for any $\psi \in L^2(\Omega)$ directly 
from \eqref{r1}-\eqref{r2}. Finally \eqref{t1a-cl} follows readily from 
this and \eqref{t1a}.


\section{Analysis of the inverse problem}
\label{sec-inv}
In this section, we suppose that $\partial \Omega$ is $\cC^{1,1}$ and 
\eqref{eq-DLaplace} holds, that is, $A_0 = -\Delta$ and 
$$
D(A_0)=H_0^1(\Omega) \cap H^2(\Omega).
$$
We recall for further use that the norm in $H^2(\Omega)$ is equivalent to 
the norm in $D(A_0)$ or in $D(A_q)$.

First we prove that the boundary operator $\mathcal N_{\alpha,\rho,q}(t)$ 
expressed by \eqref{bound}, is well-defined for all $t \in (0,T]$.

\subsection{Definition of the boundary operator}
\label{sec-wp}

By \eqref{bound} and the continuity of the trace operator 
$\varphi \mapsto \pd_\nu \varphi$ from $H^2(\Omega)$ into $L^2(\pd \Omega)$, 
it suffices to prove the following well-posedness for the 
initial-boundary value problem \eqref{eq11}.

\begin{prop}
\label{pr1} 
Let $\alpha$, $\rho$ and $q$ be the same as in Theorem \ref{t1}. 
Then, for all $g\in H^{3\slash2}(\partial\Omega)$, there exists a 
unique weak solution in $\cC([0,+\infty);H^2(\Omega))$ to 
\eqref{eq11}.
\end{prop}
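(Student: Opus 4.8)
The plan is to reduce the inhomogeneous-boundary problem \eqref{eq11} to a homogeneous-boundary problem of the form \eqref{eq1}, to which Theorem \ref{t1} applies, and then to upgrade the resulting $L^2(\Omega)$-valued solution to an $H^2(\Omega)$-valued one by exploiting the extra time regularity coming from the factor $t^k$ on the boundary. First I would fix $g \in H^{3\slash 2}(\pd\Omega)$ and choose a time-independent lifting $G \in H^2(\Omega)$ with $G_{|\pd\Omega} = g$ and $\pd_\nu$-controlled trace, using the standard continuity of the trace operator $H^2(\Omega) \to H^{3\slash 2}(\pd\Omega)$; such a $G$ exists because $\pd\Omega$ is $\cC^{1,1}$. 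Setting $w(t,x) := u(t,x) - t^k G(x)$, the boundary condition becomes homogeneous, $w_{|\Sigma}=0$, and the initial condition $w(0,\cdot)=0$ is preserved since $t^k$ vanishes at $t=0$ for $k \geq 2$. The governing equation for $w$ is then $(\rho\,\pd_t^{\alpha(x)} + \cA_q) w = f$, where the source is
\[
f(t,x) = -\rho(x)\,\pd_t^{\alpha(x)}\!\left(t^k\right) G(x) - t^k \cA_q G(x).
\]
The first term is explicit: $\pd_t^{\alpha(x)} t^k = \frac{\Gamma(k+1)}{\Gamma(k+1-\alpha(x))}\, t^{k-\alpha(x)}$, which is bounded in $t$ on compact intervals and grows at most polynomially, so $\langle t\rangle^{-\zeta} f \in L^\infty(\R_+;L^2(\Omega))$ for a suitable $\zeta$, provided $\cA_q G \in L^2(\Omega)$. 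The latter holds because $G \in H^2(\Omega)$ and $q \in L^\infty(\Omega)$ (or $L^\kappa$), so $\cA_q G = -\Delta G + qG \in L^2(\Omega)$ using \eqref{eq-DLaplace}.

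With $u_0 = 0$ and this $f$, Theorem \ref{t1} furnishes a unique weak solution $w \in \cC((0,+\infty);L^2(\Omega))$ given by the Duhamel-type representation
\[
w(t) = \int_0^t S_1(t-\tau) f(\tau)\,d\tau + S_2 f(t).
\]
The key step is then to show $w(t) \in H^2(\Omega)$ with continuous dependence on $t$ up to $t=0$. Here I would return to the resolvent representation and track regularity through the operators $S_1$ and $S_2$. Since $(A_q + \rho(x)p^{\alpha(x)})^{-1}$ maps $L^2(\Omega)$ into $D(A_0) = H_0^1(\Omega)\cap H^2(\Omega)$ by Proposition \ref{l1}, each application already lands in $H^2(\Omega)$; the point is to obtain \emph{uniform} $H^2$-bounds of the integrands that are integrable along $\gamma(\epsilon,\theta)$. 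The natural device is to estimate $\|A_q (A_q+\rho(x)p^{\alpha(x)})^{-1}\|_{\cB(L^2(\Omega))}$, which is controlled by writing $A_q(A_q+\rho p^{\alpha})^{-1} = I - \rho p^{\alpha}(A_q+\rho p^{\alpha})^{-1}$ and invoking \eqref{l1a}; the norm equivalence between $H^2(\Omega)$ and $D(A_q)$ recorded at the start of Section \ref{sec-inv} then converts these into $H^2$-estimates. The extra decay furnished by the $t^k$-source (equivalently, by the $p^{-(k+1)}$-type behavior of $F(p)$ after Laplace transform) is what guarantees convergence of the contour integrals in the $H^2$-norm and continuity up to $t=0$.

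The main obstacle I anticipate is precisely this last uniform-in-$p$ control of the solution in the $H^2$-topology along the contour $\gamma(\epsilon,\theta)$: the resolvent estimate \eqref{l1a} degrades like $r^{-\alpha_0}$ as $r \to 0$ and the operator $A_q(A_q+\rho p^\alpha)^{-1}$ is only bounded by a constant (not decaying) for large $r$, so naive integration fails. Overcoming this requires using the full strength of the $p^{-(k+1)}$ factor coming from $F(p) = \cL[t^k\,(\cdots)](p)$, together with the refined behavior of $C(r,\beta)$ established in \eqref{a14}, to secure integrability at both ends $r\to 0$ and $r\to+\infty$. Once the $H^2$-valued integrals converge uniformly on compact $t$-intervals, continuity of $t\mapsto w(t)$ in $H^2(\Omega)$ and its extension to $t=0$ follow by dominated convergence, and uniqueness is inherited from the uniqueness clause of Theorem \ref{t1}. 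Undoing the substitution, $u = w + t^k G \in \cC([0,+\infty);H^2(\Omega))$ is the desired unique weak solution to \eqref{eq11}.
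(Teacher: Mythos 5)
Your proposal follows essentially the same route as the paper: the same lifting $u = t^k G + v$ reducing \eqref{eq11} to the homogeneous-boundary problem \eqref{eq12}, the same application of Theorem \ref{t1} to obtain the $\cC((0,+\infty);L^2(\Omega))$ solution, and the same mechanism for the $H^2$-upgrade, namely the identity $A_q(A_q+\rho(x)p^{\alpha(x)})^{-1} = I - \rho(x)p^{\alpha(x)}(A_q+\rho(x)p^{\alpha(x)})^{-1}$ combined with the $p^{-(k+1)}$ decay of the transformed source, the resolvent bounds \eqref{l1a}, \eqref{a14}, and the equivalence of the $D(A_q)$- and $H^2(\Omega)$-norms, which is exactly the paper's estimate \eqref{It1e} and representation \eqref{It1d}. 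The obstacle you flag is resolved precisely as you indicate: since $k \geq 2$, the resulting exponent $1-k-\alpha_0 < -1$ makes the integrand along $\gamma(\epsilon,\theta)$ integrable at infinity, so the contour integral converges in $D(A_q)$ uniformly on compact $t$-intervals.
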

\begin{proof}
Let $G \in H^2(\Omega)$ satisfy $G=g$ on $\partial\Omega$.
Then we notice that $u=u_g$ is a solution to \eqref{eq11} 
if and only if the function $v(t,x):=u(t,x)-t^kG(x)$ is a solution 
to the system
\bel{eq12}
\left\{\begin{array}{rcll} 
(\rho(x)\partial_t^{\alpha(x)} + \cA_q) v(t,x) & = & f(t,x), & 
(t,x)\in  (0,+\infty)\times\Omega,\\
v(t,x) & = & 0, & (t,x) \in (0,+\infty)\times\partial\Omega, \\  
v(0,x) & = & 0, & x \in \Omega,
\end{array}
\right.
\ee
where $f(t,x) := -\left( \rho(x) \partial_t^{\alpha(x)} t^k + t^k \cA_q 
\right) G(x)$.

Furthermore, since  $f \in \mathcal C((0,+\infty);L^2(\Omega))$ and 
$(1+t)^{-k-1}f\in L^\infty(0,+\infty;L^2(\Omega))$, the initial-boundary 
value problem \eqref{eq12} admits a unique weak solution 
$v \in \cC((0,+\infty);L^2(\Omega))$ according to Theorem \ref{t1}. 
Let us now prove that $v \in \cC([0,+\infty);H^2(\Omega))$. For this purpose, 
we infer from the basic identity
$\cL[t^k](p):=\int_0^{+\infty} t^k e^{-pt}dt=\frac{k!}{p^{k+1}}$ that
$$ 
F(p,x):=\cL[f(\cdot,x)](p)=-\frac{k!}{p^{k+1}} \left( \cA_q + \rho(x) 
p^{\alpha(x)} \right) G(x),\ (p,x) \in \C_+ \times \Omega. 
$$
Next, upon extending the expression of the right-hand side of the above 
equality to all $p \in \C \setminus \R_-$, we obtain from the first 
equation of \eqref{eq12} that $V:=\cL[v]$ reads
\bel{It1b}
V(p,x) =-\frac{k!}{p^{k+1}} (A_q+\rho(x)p^{\alpha(x)})^{-1} \left( 
\cA_q + \rho(x) p^{\alpha(x)}\right) G(x), \ (p,x) \in (\C \setminus \R_-) 
\times \Omega.
\ee
Therefore, arguing in the same way as in the proof of Theorem \ref{t1}, 
we obtain for any fixed $\epsilon \in (0,1)$ and $\theta 
\in (\pi \slash 2,\pi)$ that
\bel{It1c}
v(t,x)=\frac{1}{2 i \pi} \int_{\gamma(\epsilon,\theta)} e^{t p} V(p,x) dp,
\ (t,x) \in [0,+\infty) \times \Omega.
\ee
On the other hand, \eqref{It1b} and Lemma \ref{l1} yield that $V(p,\cdot) 
\in D(A_q)$ for all $p \in \C \setminus \R_-$ with
\bel{It1z}
A_qV(p,x) =\frac{k!}{p^{k+1}} \left( \rho(x) p^\alpha(x) 
(A_q+\rho(x) p^{\alpha(x)})^{-1} - I \right) \left( \rho(x) p^{\alpha(x)} 
+ \cA_q \right) G(x),\ x \in \Omega.
\ee
Here the symbol $I$ stands for the identity operator in $L^2(\Omega)$. 
Applying \eqref{l1a}-\eqref{l1b}, we deduce from \eqref{It1z} that
\bea
\norm{A_qV(r e^{\pm i\theta})}_{L^2(\Omega)} & \leq& C r^{-(k+1)} \max(r^{2 
\alpha_M-\alpha_0},r^{2 \alpha_0-\alpha_M} ) \nonumber \\
& \leq& C \epsilon^{-3(\alpha_M-\alpha_0)} r^{2\alpha_M-\alpha_0-k-1} 
\nonumber \\
& \leq & C \epsilon^{-3(\alpha_M-\alpha_0)} r^{1-k-\alpha_0},\ 
r \in [\epsilon,+\infty)                     \label{It1e}
\eea
with some positive constant 
$C=C(\theta,M,\norm{g}_{H^{3 \slash 2}(\pd \Omega)},
\norm{q}_{L^{\infty}(\Omega)},\alpha_0,\alpha_M,\rho_0,\rho_M)$ which is 
independent of $\epsilon$.
Therefore we have $r \mapsto A_q V(r e^{\pm i\theta},\cdot) \in 
L^1(\epsilon,+\infty;L^2(\Omega))$ and hence
$r \mapsto V(re^{\pm i\theta},\cdot)\in L^1(\epsilon,+\infty; D(A_q))$. 
From this and \eqref{It1c}, it follows that 
$v(t,\cdot) \in D(A_q)$ for all $t \in [0,+\infty)$ with
\bel{It1d}
A_qv(t,\cdot)=
\frac{1}{2 i \pi}\int_{\gamma(\epsilon,\theta)} e^{t p} A_qV(p) dp,
\ee
proving that $A_qv \in \cC([0,+\infty);L^2(\Omega))$. As a consequence, 
we have $v \in \cC([0,+\infty);D(A_q))$ and the desired result 
follows immediately from this and the identity $D(A_q)=H_0^1(\Omega) \cap 
H^2(\Omega)$.
\end{proof}

\subsection{Proof of Theorem \ref{It2}}
\label{sec-ip}
The proof of Theorem \ref{It2} is by means of the analytic properties of the 
mapping $t \mapsto \mathcal N_{\alpha,\rho,q}(t)$, defined by \eqref{bound}, 
that are preliminarily established in the coming subsection.

\subsubsection{On the analyticity of the boundary operator}
We first introduce the following notations. Let $X$ be a Hilbert space, and 
let $\mathcal{O}$ be either a subinterval of $\R$ or an open subset of $\C$. 
We denote by $\mathcal A(\mathcal{O};X)$ the space of $X$-valued functions 
that are analytic in $\mathcal O$.

\begin{lem}
\label{ll1}
Let $g \in H^{3\slash2}(\partial\Omega)$ and let $u$ be the solution 
in $\cC([0,+\infty);H^2(\Omega))$ to \eqref{eq11} associated with $g$,
whose existence is guaranteed by Proposition \ref{pr1}. 
Then the mapping $t \mapsto \partial_\nu u(t,\cdot)_{|\partial\Omega}$ lies 
in $\mathcal A((0,+\infty);L^2(\partial\Omega))$.
\end{lem}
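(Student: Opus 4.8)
The plan is to deduce the claim from the contour–integral representation established in the proof of Proposition \ref{pr1}, by showing that the integral in question actually defines a holomorphic function of $t$ on a complex sector straddling $(0,+\infty)$. First I reduce to an interior regularity statement. Writing $u = v + t^k G$, where $G \in H^2(\Omega)$ is the fixed extension of $g$ and $v$ solves \eqref{eq12}, the term $t^k G$ depends polynomially on $t$ and is therefore analytic with values in $H^2(\Omega)$. Since the trace map $\partial_\nu \colon H^2(\Omega) \to L^2(\partial\Omega)$ is bounded and linear, composing it with an $H^2(\Omega)$--valued analytic function produces an $L^2(\partial\Omega)$--valued analytic function. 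Hence it is enough to prove that $t \mapsto v(t,\cdot)$ belongs to $\mathcal A((0,+\infty);D(A_q))$, recalling that the $D(A_q)$--norm is equivalent to the $H^2(\Omega)$--norm.

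By \eqref{It1c} and \eqref{It1d}, for any fixed $\epsilon \in (0,1)$ and $\theta \in \left( \frac{\pi}{2},\pi \right)$,
\bel{plan-rep}
v(t,\cdot) = \frac{1}{2i\pi}\int_{\gamma(\epsilon,\theta)} e^{tp} V(p,\cdot)\, dp, \qquad A_q v(t,\cdot) = \frac{1}{2i\pi}\int_{\gamma(\epsilon,\theta)} e^{tp} A_q V(p,\cdot)\, dp,
\ee
each integral being independent of $\epsilon$ and $\theta$ by Proposition \ref{l1} and Cauchy's theorem. I would then extend the parameter $t$ to complex values and consider the sector $\Sigma_\theta := \{ t \in \C \setminus \{0\};\ |\arg t| < \theta - \frac{\pi}{2} \}$, which contains $(0,+\infty)$ because $\theta > \frac{\pi}{2}$; since $\theta$ may be taken arbitrarily close to $\pi$, the half-angle of $\Sigma_\theta$ can be made as close to $\frac{\pi}{2}$ as desired. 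For $p = r e^{\pm i \theta} \in \gamma_\pm(\epsilon,\theta)$ and $t \in \Sigma_\theta$ one has $\re(tp) = |t|\, r\, \cos(\theta \pm \arg t)$, and the constraint $|\arg t| < \theta - \frac{\pi}{2}$ keeps $\theta \pm \arg t$ inside a compact subset of $\left( \frac{\pi}{2}, \frac{3\pi}{2} \right)$ on compact subsets of $\Sigma_\theta$, so that $|e^{tp}| \le e^{-c r}$ for some $c>0$ depending only on the compact set.

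The heart of the argument is then to combine this exponential decay with the resolvent estimate \eqref{It1e}, together with the analogous bound on $\| V(r e^{\pm i\theta}) \|_{L^2(\Omega)}$ coming from \eqref{l1a} and \eqref{It1b}, both of which grow at most polynomially in $r$. On compact subsets of $\Sigma_\theta$ the factor $e^{-cr}$ dominates these polynomial growths — as well as the extra factor $|p| = r$ produced by differentiating the integrands in \eqref{plan-rep} with respect to $t$ — so that the integrals converge and are dominated locally uniformly in $t$. Differentiation under the integral sign is therefore legitimate and shows that both $t \mapsto v(t,\cdot)$ and $t \mapsto A_q v(t,\cdot)$ are holomorphic from $\Sigma_\theta$ into $L^2(\Omega)$; equivalently $t \mapsto v(t,\cdot)$ is holomorphic from $\Sigma_\theta$ into $D(A_q)$. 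Restricting to $(0,+\infty) \subset \Sigma_\theta$ yields $v \in \mathcal A((0,+\infty);D(A_q))$, whence the conclusion by the reduction of the first paragraph.

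The step I expect to be the main obstacle is the uniform control, over compact subsets of $\Sigma_\theta$, of the decay rate of $\re(tp)$ along the two infinite rays $\gamma_\pm(\epsilon,\theta)$ \emph{simultaneously}: one must verify that the admissible opening $|\arg t| < \theta - \frac{\pi}{2}$ genuinely keeps both $\cos(\theta + \arg t)$ and $\cos(\theta - \arg t)$ negative and bounded away from zero, so that the exponential domination holds with a single constant $c$ on each compact set. Once this geometric bookkeeping is in place, the remainder is a routine dominated-convergence justification of the holomorphy of a parameter-dependent contour integral.
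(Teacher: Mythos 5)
Your proposal is correct and takes essentially the same route as the paper's proof: the same reduction $u = t^kG + v$ together with continuity of the normal trace on $H^2(\Omega)$, the same contour representations \eqref{It1c}--\eqref{It1d}, and the same extension of $t$ into a complex sector where $\re(tp)\leq -c\,r$ along $\gamma_\pm(\epsilon,\theta)$ dominates the polynomial bounds from \eqref{It1b} and \eqref{It1e}, giving holomorphy into $D(A_q)$. The only cosmetic difference is that the paper takes the half-angle $\theta_1 < \min\left(\theta-\frac{\pi}{2},\,\pi-\theta\right)$ so that $\cos(\pm\theta+\psi)\leq\cos(\theta-\theta_1)<0$ holds uniformly on the whole sector, whereas you allow the larger opening $\theta-\frac{\pi}{2}$ and secure the decay constant only on compact subsets; since analyticity is a local property, both versions suffice.
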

\begin{proof} 
By the definitions and the notations used in the proof of 
Proposition \ref{pr1}, the solution $u$ to \eqref{eq11} reads 
$u(t,x)=t^kG(x)+v(t,x)$ for a.e. $(t,x) \in (0,+\infty) \times \Omega$, where 
$v \in \cC([0,+\infty);H^2(\Omega))$ is a solution to \eqref{eq12}. 
Since $G \in H^2(\Omega)$, it is apparent that $t \mapsto t^k\partial_
\nu G_{|\partial\Omega}\in \mathcal A((0,+\infty);L^2(\partial\Omega))$. 
Therefore we are left with the task of showing that $t \mapsto \partial_\nu 
v(t,\cdot)_{|\partial\Omega} \in\cA((0,+\infty);L^2(\partial\Omega))$. 
Since $D(A_q)=H_0^1(\Omega) \cap H^2(\Omega)$ and 
the trace map $w \mapsto \partial_\nu w_{|\partial\Omega}$ is continuous 
from $H^2(\Omega)$ into $L^2(\partial \Omega)$, it is sufficient to 
prove that $t \mapsto v(t,\cdot)\in\mathcal A((0,+\infty);D(A_q))$. 

For this purpose, we fix $\theta_1\in (0,\theta-\pi\slash2) \cap 
(0,\pi-\theta)$, put $\cO :=\{\tau e^{i\psi};\ \tau \in (0,+\infty),\ \psi 
\in(-\theta_1,\theta_1) \}$, and we extend $v$ into a function of 
$\mathcal A(\cO;D(A_q))$. This can be done with the help of 
\eqref{It1e}-\eqref{It1d} by noticing 
$$
|e^{z p}|=|e^{\tau r e^{i(\pm \theta +\psi)}}|
= e^{\tau r \cos( \pm \theta +\psi)} \quad
\mbox{for all $z=\tau e^{i\psi} \in \cO$ 
and $p=re^{\pm i\theta}$ with $r \in[\epsilon,+\infty)$}.
$$

Indeed, since we have $\theta +\psi\in (\theta-\theta_1,\theta+\theta_1)
\subset (\pi\slash2,\pi)$ and 
\\
$-\theta +\psi\in (-\theta-\theta_1,-\theta+\theta_1)\subset 
(-\pi,-\pi\slash2)$, 
it holds true that $\cos( \pm \theta +\psi)\leq \cos(\theta-\theta_1)$ and
\bel{eq-en}
|e^{zp}| \leq e^{|z| r \cos(\theta-\theta_1)},\ z \in \cO,\ p=re^{\pm i\theta},
\ r \in[\epsilon,+\infty).
\ee
Furthermore, since  $\cos(\theta-\theta_1) \in(-1,0)$, 
it follows from \eqref{It1e} and \eqref{eq-en} that
$$
\mathcal W:z\mapsto \frac{1}{2 i \pi}\int_{\gamma(\epsilon,\theta)} e^{z p} 
A_qV(p) dp 
$$
is well defined in $\cO$. Moreover, for any compact subset 
$K \subset \mathcal O$ in $\mathbb C$, we infer from \eqref{It1e} that
$$
\norm{e^{z p} A_qV(r e^{\pm i\theta}) }_{L^2(\Omega)} 
\leq C \epsilon^{-3(\alpha_M-\alpha_0)} e^{\delta r \cos(\theta-\theta_1)} 
r^{1-k-\alpha_0},\ z\in K,\ r \in [\epsilon,+\infty),
$$
where $\delta :=\inf\{|z|;\ z\in K\}>0$ and $C$ is the constant in 
\eqref{It1e}. 
Next, as $z\mapsto e^{z p} A_qV(p) \in \cA(\cO;L^2(\Omega))$ for all 
$p \in \{ r e^{\pm i\theta};\ r \in [\epsilon,+\infty) \}$, this entails that
$\mathcal W \in \mathcal A(\mathcal O;L^2(\Omega))$. Furthermore, since 
$\mathcal W(t)=A_q v(t,\cdot)$ for all $t \in (0,+\infty)$, 
we obtain by \eqref{It1e} that 
\bel{eq-an}
t\mapsto A_qv(t,\cdot) \in\mathcal A((0,+\infty);L^2(\Omega)).
\ee
Finally, arguing in the same way as above, we deduce from \eqref{It1c} that 
$t\mapsto v(t,\cdot)\in\mathcal A((0,+\infty);L^2(\Omega))$. 
This and \eqref{eq-an} yield that $t \mapsto v(t,\cdot)\in\mathcal A
((0,+\infty);D(A_q))$, which proves the result.
\end{proof}

In terms of Lemma \ref{ll1}, we can complete the proof of Theorem \ref{It2}.

\subsubsection{Completion of the proof}
For $j=1,2$, we denote by $u_j$ the weak solution to the initial-boundary 
value problem \eqref{eq11} associated with $g\in \Hin$, 
$(\alpha,\rho,q)=(\alpha_j,\rho_j,q_j)$, and $T=+\infty$. The proof is divided 
into three steps. The first one is to establish that
\bel{It2d}
\partial_\nu u_1(t,\cdot)_{|\Go}=\partial_\nu u_2(t,\cdot)_{|\Go},\ 
t \in(0,+\infty),
\ee
and the second one is to derive from \eqref{It2d} that the functions 
$U_j :=\mathcal L[u_j]$, $j=1,2$, verify
\bel{It2dd}
\partial_\nu U_1(p,\cdot)_{|\Go}=\partial_\nu U_2(p,\cdot)_{|\Go},\quad 
p\in(0,+\infty).
\ee
The third step corresponds to the end of the proof, which is by means of 
the existing results for the Calder\'on problem with partial Cauchy data.

\noindent {\bf Step 1.} Put $h(t,x) := \partial_\nu u_1(t,x)-\partial_\nu u_2(t,x)$ for $(t,x) \in 
(0,+\infty) \times \Go$. We recall from Lemma \ref{ll1}  that 
$h \in \mathcal A((0,+\infty);L^2(\Go))$, and from \eqref{It2a} that
$$
h(t_n)=0,\ n \in \N.
$$
Therefore, by \eqref{cond1}, the set of the zeros of the analytic function $h$
has accumulation point in $(0,+\infty)$, so that identically vanishes, 
and \eqref{It2d} follows.

\noindent {\bf Step 2.} For $j=1,2$, let $v_j$ denote the solution to \eqref{eq12} where 
$(\alpha_j,\rho_j,q_j)$ is substituted into $(\alpha,\rho,q)$ such that 
\bel{st2.1}
u_j(t,x)=t^kG(x)+v_j(t,x),\ (t,x) \in Q.
\ee
Furthermore, putting $V_j:=\cL v_j$, we deduce from \eqref{It1b} and 
\eqref{It1e} that
\bea
\left\| \int_{\gamma_\pm(\epsilon,\theta)} e^{t p} A_{q_j} V_j(p) d p \right\|
_{L^2(\Omega)} 
& \leq & C \epsilon^{-3(\alpha_M-\alpha_0)} \int_\epsilon^{+\infty} 
r^{1-k-\alpha_0} dr \nonumber \\
& \leq & \frac{C}{k+\alpha_0-2} \epsilon^{2-k-\alpha_0-3(\alpha_M-\alpha_0)}, 
\label{st2.2}
\eea
where the constant $C$ is the same as in \eqref{It1e}. 
Similarly, by Lemma \ref{l1}, we infer from \eqref{a14} and \eqref{It1z} that
\bea
\left\| \int_{\gamma_0(\epsilon,\theta)} e^{t p} A_{q_j} V_j(p)
d p \right\|_{L^2(\Omega)} & \leq& C \epsilon^{-(k+\alpha_M)} 
\left( \int_{-\theta}^\theta e^{t \epsilon \cos \beta} C_\beta d \beta \right) 
\nonumber \\
& \leq& Ce^{t \epsilon} \epsilon^{-(k+2 \alpha_M-\alpha_0)}, \label{st2.3}
\eea
where another constant $C>0$ is independent of $\epsilon$. Thus, for all $t 
\in (1,+\infty)$, by taking $\epsilon=t^{-1}$ in \eqref{st2.2}-\eqref{st2.3}
we see that
$\norm{\int_{\gamma(\epsilon,\theta)} e^{t p} A_{q_j} V_j(p) d p}
_{L^2(\Omega)}$ is upper bounded by $t^{k+2 \alpha_0 - \alpha_M}$ 
up to some positive constant $C_j$, which is independent of $t$. 
In light of \eqref{It1d}, this entails that $\norm{v_j(t,\cdot)}_{H^2(\Omega)} 
\leq C_j t^{k+2 \alpha_0 - \alpha_M}$  for every $t \in (1,+\infty)$. 
Therefore, by \eqref{st2.1} we have
$$
\norm{u_j(t,\cdot)}_{H^2(\Omega)}\leq C_j t^{k+2 \alpha_0},\ t \in (1,+\infty). $$
Moreover, since $v_j \in L^\infty(0,1;H^2(\Omega))$ in virtue of 
Lemma \ref{ll1}, and hence $u_j \in L^\infty(0,1;H^2(\Omega))$ 
by \eqref{st2.1}, we obtain that
$t \mapsto e^{-pt}u_j(t,\cdot)\in L^1(0,+\infty;H^2(\Omega))$ for all 
$p \in \C_+$. This and the continuity of the trace map 
$v \mapsto\partial_\nu v_{|\partial\Omega}$ from $H^2(\Omega)$ into $L^2(\partial\Omega)$, yield that
$$
\mathcal L[\partial_\nu u_j](p)=\partial_\nu U_j(p),\ j=1,2,\ p \in \C_+.
$$
Now \eqref{It2dd} follows from this and \eqref{It2d}.

\noindent {\bf Step 3.} We can complete the proof by \cite[Theorem 7]{IY2} (see also \cite{IY1}) 
when $d=2$ and \cite[Theorem 1.2]{KSU} when $d \geq 3$.

\begin{thm}
\label{thm-c}
Assume that $\partial \Omega$ is smooth and that $\Omega$ is connected.
For 
$$
V \in \mathcal V:= \{ q \in L^\infty(\Omega);\ 0\ \mbox{lies in the resolvent 
set of}\ A_q \}, 
$$
let $\Lambda_V$ be the partial Dirichlet-to-Neumann map
$\Hin \ni \phi\mapsto\partial_\nu w_{|\Go}$, 
where $w$ is the solution to 
\bel{ell-bvp}
\left\{\begin{array}{rcll} 
-\Delta w+V(x)w & = & 0, & x \in   \Omega,\\
w(x) & = & \phi(x), & x \in  \pd \Omega. \\  
\end{array}
\right.
\ee
For $j=1,2$, pick $V_j$ in $\mathcal V \cap W^{1,r}(\Omega)$ with 
$r \in (2,+\infty)$, if $d=2$, and in $\mathcal V$ if $d \geq 3$. 
Then
\bel{It2e}
\mbox{$\Lambda_{V_1}=\Lambda_{V_2}$ yields $V_1=V_2$.}
\ee
\end{thm}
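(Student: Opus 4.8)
The plan is to identify Theorem~\ref{thm-c} as the uniqueness statement for the Calder\'on problem with partial Cauchy data associated with the Schr\"odinger operator $-\Delta + V$, and to reduce it to the known solutions of this problem, handling the cases $d=2$ and $d \geq 3$ separately. The starting point common to both is the standard integral identity: for $V_1, V_2 \in \mathcal{V}$, if $w_1$ solves \eqref{ell-bvp} with $V=V_1$ and $w_2$ solves \eqref{ell-bvp} with $V=V_2$, then Green's formula together with the hypothesis $\Lambda_{V_1}=\Lambda_{V_2}$ yields an identity of the form $\int_\Omega (V_1-V_2)\, w_1 w_2 \, dx = \mathcal{R}$, where $\mathcal{R}$ collects boundary contributions supported on the inaccessible part $\partial\Omega \setminus (\Gi \cup \Go)$. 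The entire difficulty is thus to produce solutions $w_1, w_2$ rich enough to let this identity detect $V_1-V_2$, while simultaneously forcing $\mathcal{R}$ to vanish in the limit.

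For $d \geq 3$ I would use complex geometric optics (CGO) solutions $w_j = e^{\rho_j \cdot x}(1+\psi_j)$ with complex frequencies satisfying $\rho_j \cdot \rho_j = 0$ and $|\rho_j| \to \infty$, arranged so that $\rho_1 + \rho_2 = i\xi$ for a prescribed $\xi \in \R^d$ and the remainders $\psi_j$ tend to $0$. Then $w_1 w_2 \to e^{i\xi\cdot x}$, and passing to the limit in the integral identity would give $\widehat{V_1-V_2}(\xi)=0$ for every $\xi$, hence $V_1=V_2$. The point at which the geometric hypothesis of the case $d\geq 3$ intervenes is the control of $\mathcal{R}$: with the logarithmic Carleman weight adapted to the point $x_0$ lying outside the convex hull of $\overline{\Omega}$, the sign of the weight on the boundary is dictated by $(x-x_0)\cdot\nu$, so that the portions $\{x\in\partial\Omega;\ (x-x_0)\cdot\nu \geq 0\}\subset\Gi$ and $\{x\in\partial\Omega;\ (x-x_0)\cdot\nu\leq0\}\subset\Go$ are exactly the ones that can be measured or absorbed via the associated boundary Carleman estimate. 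This is precisely \cite[Theorem 1.2]{KSU}, which I would invoke.

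For $d=2$ the inverse problem is critically determined, so linear CGO phases no longer suffice and I would instead follow the Bukhgeim-type argument implemented by Imanuvilov and Yamamoto. One constructs CGO solutions of the form $w = e^{\Phi/h}(a+r)$ from a holomorphic phase $\Phi$ possessing a nondegenerate critical point at an arbitrary interior point $z_0$, with the semiclassical parameter $h \to 0^+$; stationary phase applied to the integral identity then isolates the pointwise value of $V_1-V_2$ at $z_0$, and letting $z_0$ range over $\Omega$ gives $V_1=V_2$. The partial-data aspect is handled by choosing $\Phi$ so that $\re \Phi$ is strictly negative on the inaccessible arcs of $\partial\Omega$, which exponentially suppresses $\mathcal{R}$; this is the stage where the assumption that $\partial\Omega$ is a finite union of smooth closed curves and the $W^{1,r}$-regularity of the potentials are used. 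The conclusion is \cite[Theorem 7]{IY2} (see also \cite{IY1}).

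In both regimes the single genuine obstacle is the same, namely the suppression of the boundary term $\mathcal{R}$ over the part of $\partial\Omega$ carrying no Cauchy data: in dimension $d\geq 3$ this rests on a boundary Carleman estimate exploiting the sign structure furnished by $x_0$, and in dimension $2$ on the careful design of the holomorphic weight together with a stationary-phase analysis. Since both constructions are long and delicate and are carried out in full in the cited works, I would not reproduce them; instead I would verify that the hypotheses of Theorem~\ref{thm-c} coincide with those of \cite{KSU} for $d\geq 3$ and of \cite{IY2} for $d=2$, and thereby conclude \eqref{It2e}.
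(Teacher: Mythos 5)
Your proposal is correct and matches the paper's own treatment: the paper proves Theorem \ref{thm-c} precisely by invoking \cite[Theorem 7]{IY2} (see also \cite{IY1}) when $d=2$ and \cite[Theorem 1.2]{KSU} when $d\geq 3$, with no further argument. Your additional sketch of what lies inside those citations (logarithmic Carleman weights and CGO solutions for $d\geq 3$, Bukhgeim-type holomorphic phases with stationary phase for $d=2$) is accurate but not part of the paper's proof, which, like yours, reduces the matter to checking that the hypotheses align with the cited works.
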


It is clear for all $p \in (0,+\infty)$ that
$\tilde{U}_j(p):=\frac{p^{k+1}}{k!} U_j(p)$, $j=1,2$, is a solution to 
\eqref{ell-bvp} associated with
$V=q_j+\rho_jp^{\alpha_j}$ and $\phi=g$. As a consequence, we have
$$
\Lambda_{q_1+\rho_1p^{\alpha_1}}g=\Lambda_{q_2+\rho_2p^{\alpha_2}}g,\quad 
p \in (0,+\infty)
$$
by \eqref{It2dd}, and since $g$ is arbitrary in $\Hin$, this immediately 
entails that 
\bel{It2f}
\Lambda_{q_1+\rho_1p^{\alpha_1}}=\Lambda_{q_2+\rho_2p^{\alpha_2}},\quad 
p \in (0,+\infty).
\ee
Moreover, from the definition of $\mathcal E_d$, 
for every $p \in (0,+\infty)$, we have $q_j+\rho_jp^{\alpha_j} \in W^{1,r}
(\Omega)$ with $r \in (2,+\infty)$ if $d=2$, 
and $q_j+\rho_jp^{\alpha_j} \in L^\infty(\Omega)$ if $d \geq 3$.
Therefore, applying \eqref{It2e} with $V_j=q_j+\rho_jp^{\alpha_j}$, we infer 
from \eqref{It2f} that
\bel{eq-f}
q_1+\rho_1p^{\alpha_1}=q_2+\rho_2p^{\alpha_2},\ p \in (0,+\infty).
\ee
Letting $p$ to zero in \eqref{eq-f}, we see that $q_1=q_2$. 
Thus, taking $p=1$ in \eqref{eq-f}, we obtain that $\rho_1=\rho_2$. 
Finally, applying \eqref{eq-f} with $p=e$, we find that
$e^{\alpha_1}=e^{\alpha_2}$, which yields that $\alpha_1=\alpha_2$. 

\noindent {\bf Acknowledgements.} 
The two first authors would like to thank the Department of Mathematical 
Sciences of The University of Tokyo,
where part of this article was written, for its kind hospitality. All the authors are partially supported by Grants-in-Aid for Scientific 
Research (S) 15H05740 and (S) 26220702, Japan Society for the
Promotion of Science.

\end{document}